\theoremstyle{plain}
\newtheorem{thm}{Theorem}[section]
\newtheorem*{thm*}{Theorem}
\newtheorem{prop}{Proposition}[section] 
\newtheorem{lem}{Lemma}[section] 
\newtheorem{cor}{Corollary}[section]
\newtheorem*{cor*}{Corollary}
\newcommand {\R} {\mathbb{R}} \newcommand {\Z} {\mathbb{Z}}
\newcommand {\T} {\mathbb{T}} \newcommand {\N} {\mathbb{N}}
\newcommand {\p} {\partial}
\newcommand {\dt} {\partial_t}
\DeclareMathOperator{\supp}{supp}
\DeclareMathOperator{\rhs}{rhs}
\begin{document}
\title[Plasma Echo Chains]{On Echo Chains in Landau damping: Self-similar Solutions and Gevrey 3 as a
  Linear Stability Threshold}
\author{Christian Zillinger}
\address{BCAM -- Basque  Center  for  Applied  Mathematics, Mazarredo 14, E48009
  Bilbao, Basque Country -- Spain}
\email{czillinger@bcamath.org}
\begin{abstract}
  We show that the linearized Vlasov-Poisson equations
  around self-similar non-homogeneous states near zero contain the full plasma
  echo mechanism, yielding Gevrey 3 as a critical stability class.
  Moreover, here Landau damping may persist despite blow-up:
  We construct a critical Gevrey regularity class in which the force
  field converges in $L^2$. Thus, on the one hand, the physical phenomenon of
  Landau damping holds. On the other hand, the density diverges to infinity in
  Sobolev regularity. Hence, ``strong damping'' cannot hold.
\end{abstract}
\keywords{Landau damping, Vlasov Poisson, Plasma echoes, Cascade, Blow-up, Modified
  Scattering}
\subjclass[2010]{35B44,35B40,35B34}
\maketitle
\tableofcontents

\section{Introduction}
\label{sec:intro}
In this article we are interested in the echo mechanism for the Vlasov-Poisson
equations
\begin{align}
  \label{eq:1}
  \dt f + (v, F) \cdot \nabla_{x,v} f=0,
\end{align}
where
\begin{align*}
  F(t,x)=  \nabla_x W * \rho,  \ \rho= \int f dv, \ |\hat{W}(k)|=|k|^{1-s},
\end{align*}
is the force field generated by the density. Here, the physically most
interesting cases are given by gravitational and Coulomb interaction for which
$F(t,x)= \pm \nabla \Delta^{-1}\rho$.
It is a classical result going back to Landau \cite{Landau} that under suitable conditions smooth homogeneous
solutions $f=f_0(v)$ are linearly asymptotically stable. More precisely, in the
linearized problem with Sobolev regular initial data the perturbation of $f_0$
asymptotically converges to a solution of the free transport problem and as a
result the perturbation of the force field $F$ tends to zero as $t\rightarrow
\infty$. This decay of the force field is known as (linear) \emph{Landau
  damping} and continues to be of great importance in the study of plasma physics.

Following the results on the linearized problem it was a natural longstanding
question whether and under which conditions nonlinear Landau damping may hold.
Here, in a seminal work Mouhot and Villani \cite{Villani_long} showed that for
Gevrey regular $f_0$ satisfying the Penrose condition and sufficiently small
Gevrey regular perturbations nonlinear Landau damping holds. In that work the
high regularity requirement, which differs strongly from the mild requirements
in the linearized problem, is introduced due to worst case growth estimates in a
``toy model''. In \cite{bedrossian2016nonlinear} Bedrossian showed that there
indeed exists initial data attaining norm inflation as in the toy model.
However, we note that such a result does \emph{not} rule out Landau damping.
More precisely, we note there is a \emph{hierarchy} of increasingly stronger statements
established in the mathematical and physical literature which are each referred to
as damping:
\begin{enumerate}[label={[\arabic*]}]
\item \label{item:physical} The force field converges to a (simpler) asymptotic profile as time tends
  to infinity. This is the physically observed phenomenon. 
\item The corresponding perturbations to the phase-space density remains
  uniformly bounded in a suitable $L^p$ space and asymptotically converges
  weakly as time tends to infinity. 
\item \label{item:strong} The perturbation asymptotically behaves like a free solution of an
  associated linear problem. In the language of dispersive equations one says
  that the solution scatters with respect to the linear dynamics.
\end{enumerate}
The aim of the present article is two-fold:
\begin{itemize}
\item We propose that the Gevrey regularity requirement should be understood as
  a \emph{secondary linear instability}. That is, in any small analytic neighborhood of
  $f_0(v)$ there exist \emph{self-similar solutions}
  \begin{align*}
    f(t,x,v)=f_0(v)+ f_{s}(x-tv,v), t>\epsilon>0,
  \end{align*}
  whose \emph{linearized} problem exhibits full \emph{echo chains} and is unstable in
  sub Gevrey-3 regularity. Throughout this article we consider the special case
  $f_0(v)=0$. As we remark after Theorem \ref{thm:main}, while many results of
  Section \ref{sec:better} also hold for more general $f_0$, our blow-up results
  exploit separation of supports, while $f_0\neq 0$ might introduce (small) overlaps. 
\item Even if ``strong damping'' in the sense \ref{item:strong} fails, this does not
  necessarily imply that ``physical Landau damping'' in the sense \ref{item:physical} fails.
  In this work we show that for the linearized problem around self-similar
  solutions this scenario indeed happens. That is, we construct \emph{Gevrey regular initial
    data}, such that $f(t)$ \emph{diverges} in Sobolev regularity (norm
  inflation to infinity) but such that the force field $F(t)$ still
  \emph{converges} in $L^2$ as $t\rightarrow \infty$.
\end{itemize}
\subsubsection*{Main Results}
\label{sec:results}

We show in Section \ref{sec:selfsimilar} that the Vlasov-Poisson equations
possess numerous self-similar solutions of the form
\begin{align}
  \label{eq:6}
  f_*(t,x,v)=f_0(v)+ \epsilon \cos(x-tv) \psi(v), t>T>0.
\end{align}
where $\psi(v)$ is compactly supported in Fourier space inside a ball
$B_{\delta}(0)$. Such densities are small analytic perturbations of the
homogeneous solution $f_0(v)$ and solve the Vlasov-Poisson equations on
$(\delta,\infty)$.

We remark that Bedrossian \cite{bedrossian2016nonlinear} considered a similar
ansatz in his work on nonlinear echo chains, where however the Fourier transform
$\hat{\psi}(\eta)=\exp(-|\eta|)$ was not compactly supported and thus resulted
in a non-self-similar solution. The support restriction not only allows us to
explicitly determine a solution to the nonlinear Vlasov-Poisson equations to
perturb around but also allows us to cleanly isolate the echo chain mechanism in
the linearized problem for arbitrarily large frequencies.

Our main results are obtained in Theorems \ref{thm:inflation}, \ref{thm:blowup}
and \ref{thm:stability} and summarized in the following theorem.

\begin{thm}
  \label{thm:main}
  Let $\psi \in \mathcal{S}(\R)$ with $\text{supp}(\hat{\psi})\subset (-\delta,\delta)$, $0<\delta<0.1$. Then
  \begin{align*}
    f_*(t,x,v)=f_0(v)+ \epsilon \cos(x-tv) \psi(v)
  \end{align*}
  is a solution of the Vlasov-Poisson equations on $(0.1,\infty)$.\\

  Consider the linearized Vlasov-Poisson equations around $f_*$ with $f_0\equiv 0$
  on $(0,\infty)$ with initial data $h_0$:
  \begin{align*}
    \tag{LVP}
    \begin{split}
    \dt h + F[\int h(t,x-tw,w)dw] (\nabla_{v}-t \nabla_x)(\epsilon \cos(x_1) \psi(v))&=0,\\
    \hat{F}[\rho](k)&= k\hat{W}(k) \hat{\rho}(k), \\
    h(0)&=h_0,
    \end{split}
  \end{align*}
  with $|\hat{W}(k)|=|k|^{-2}$.
  \begin{itemize}
  \item If $h_0 \in \mathcal{G}_{3}$ is Gevrey regular with a sufficiently large
    constant $c>0$,
    \begin{align*}
      \sum \int |\tilde{h}_0|^2 \exp(c \sqrt[3]{|\eta|}) d\eta\leq C_{0}<\infty,
    \end{align*}
    then there exists a constant $C>0$ (independent of $h_0$ or $C_0$) such that
    for all times
     \begin{align*}
    \sum_{k}\int |\tilde{h}(t,k,\eta)|^2 \exp(\sqrt[3]{|\eta|}) d\eta \leq C C_{0}.
     \end{align*}
     The linearized problem is \emph{stable in Gevrey regularity} and linear Landau damping
     holds.
   \item Suppose in addition that $\hat{\psi}\geq 0$, then there exists initial
     data $h_0 \in \mathcal{G}_3$, supported in frequency in 
     $\{k_0\} \times (\eta_0-1/2,\eta_0+1/2)$ such that the solution
     $h(t)$ with this initial data is stationary for $t>\eta_0+1/2+ \delta
     k_0=:T_1'$ and there exist constants $c_1,c_2$ (proportional to
     $\|\psi(v)\|_{L^\infty}$ and independent of $\eta_0$) such that
     \begin{align*}
       \exp(\sqrt[3]{c_1 \eta_0})\leq   \|h(T_1')\|_{L^2} \leq \exp(\sqrt[3]{c_2 \eta_0}).
     \end{align*}
     There is \emph{norm inflation} due to \emph{echo chains}.
  \item For every $s \in \R$ there exists $h_\infty \in H^{s}\setminus H^{s+}$ and
  $h_0 \in \mathcal{G}_{3}$ (with small constant) such that the solution $h(t)$ of \eqref{eq:LVP} with
  initial datum $h_0$ converges to $h_\infty$ in $H^s$ as $t \rightarrow
  \infty$. In particular, the solution $h(t)$ \emph{diverges} in $H^{s+}$. However, if $s\geq
  0$ then $F[h](t)\rightarrow_{L^2} 0$ as $t\rightarrow \infty$.
  For this data physical linear damping in the sense \ref{item:physical} holds, but strong
  damping to transport in the sense \ref{item:strong} fails.
  \end{itemize}
\end{thm}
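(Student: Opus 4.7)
The strategy is linear superposition of the norm-inflation building blocks supplied by the second bullet. Fix a rapidly increasing sequence $(\eta_n)_{n\in \N}\to \infty$ (for concreteness, $\eta_n=2^{n^2}$), and let $h_0^{(n)}$ be the datum of bullet 2 at starting frequency $(k_0,\eta_n)$, $L^2$-normalized so that its cascade $h^{(n)}(t)$ becomes stationary for $t>T_n'\sim \eta_n$ with
\[
M_n := \|h^{(n)}(\infty)\|_{L^2} \in \bigl[\exp(\sqrt[3]{c_1\eta_n}),\, \exp(\sqrt[3]{c_2\eta_n})\bigr].
\]
Because the linearized Vlasov--Poisson equation is linear, I set
\[
h_0 = \sum_n a_n h_0^{(n)}, \qquad h_\infty = \sum_n a_n h^{(n)}(\infty),
\]
for coefficients $a_n>0$ to be chosen. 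Each partial sum $\sum_{n\le N}a_n h^{(n)}(t)$ coincides with $\sum_{n\le N}a_n h^{(n)}(\infty)$ for all $t>T_N'$, so $H^s$-convergence $h(t)\to h_\infty$ will follow automatically once $h_\infty\in H^s$.

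Next I would pick $a_n = n^{-1}\eta_n^{-s-1/2} M_n^{-1}$. Provided $(\eta_n)$ grows fast enough, the frequency supports of the individual $h^{(n)}(\infty)$ are pairwise essentially disjoint (the echo chain at level $n$ stays within $[0,\eta_n]$ in $\eta$, while level $n+1$ starts near $\eta_{n+1}\gg \eta_n$), so Sobolev norms decouple. Assuming the highest-frequency component of $h^{(n)}(\infty)$ lives near $\eta=\eta_n$ and carries a non-negligible fraction of the $L^2$-mass, this yields
\[
\|h_\infty\|_{H^s}^2 \sim \sum_n a_n^2 \eta_n^{2s} M_n^2 \sim \sum n^{-2} <\infty,
\]
so $h_\infty \in H^s$, while the corresponding $H^{s+\epsilon}$ sum is $\sim \sum n^{-2}\eta_n^{2\epsilon}=\infty$ for every $\epsilon>0$, giving $h_\infty\notin H^{s+}$. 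The Gevrey-3 norm of $h_0$ is bounded by
\[
\sum_n a_n^2 \exp(2c\sqrt[3]{\eta_n}) \lesssim \sum_n n^{-2}\eta_n^{-2s-1}\exp\bigl(2(c-c_1)\sqrt[3]{\eta_n}\bigr),
\]
which is finite for any $c<c_1$. This is the ``small constant'' regime that keeps us below the stability threshold of the first bullet.

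For the force field (when $s\ge 0$), the identity $\hat F(t,k)=k\hat W(k)\hat\rho(t,k)$ together with $\hat\rho(t,k)=\tilde h(t,k,kt)$ gives
\[
\|F(t)\|_{L^2}^2=\sum_k|k|^{-2}|\tilde h(t,k,kt)|^2.
\]
Once each $\tilde h^{(n)}(\infty)$ is stationary in $(k,\eta)$ with bounded $\eta$-support, the curve $\eta=kt$ eventually leaves the support for every fixed $k$, so each summand vanishes pointwise as $t\to\infty$. Dominated convergence with the majorant $|k|^{-2}\sup_\eta|\tilde h_\infty(k,\eta)|^2$, which is summable because $h_\infty\in L^2$ for $s\ge 0$, then yields $\|F(t)\|_{L^2}\to 0$.

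The main obstacle is the ``almost disjoint supports'' claim, which underlies the decoupling of Sobolev norms and the effective high-frequency lower bound at $\eta_n$. This requires a precise description of where the echo cascade of bullet 2 places frequency mass; choosing $\eta_{n+1}/\eta_n$ super-linearly large should guarantee that the cascade from level $n$ cannot reach the starting region of level $n+1$. A secondary point is that the inflation constants $c_1, c_2$ in bullet 2 must be uniform in $\eta_n$ --- this is explicitly asserted there and is essential for the exponential factors to combine as used above.
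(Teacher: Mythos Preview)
Your superposition strategy for the third bullet is exactly the one the paper uses (Theorem~\ref{thm:blowup}): take the single-chain solutions from the second bullet, rescale by $\eta_{0,j}^{-s}/C_j$ with $C_j\approx\exp(\sqrt[3]{c\eta_{0,j}})$, weight by an $\ell^2$ sequence that is not in any $\eta_{0,j}^{\sigma}$-weighted $\ell^2$, and sum. So the high-level argument is right.

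There is, however, a genuine misconception in your description of the frequency support, and it affects several steps. You write that ``the echo chain at level $n$ stays within $[0,\eta_n]$ in $\eta$''. In fact the cascade is in the \emph{$k$-variable}, from $k_0$ down to $0$; in $\eta$ the solution stays confined to a thin strip $(\eta_n-\tfrac12-\delta k_0,\,\eta_n+\tfrac12+\delta k_0)$ for all times (this is Proposition~\ref{prop:iterate}). So disjointness of supports is much stronger than you suspect: it is automatic once the $\eta_n$ are separated by more than $1+2\delta k_{0,n}$, and the components are genuinely $L^2$-orthogonal at \emph{every} time, not just asymptotically. This also removes the need for your hedge ``assuming the highest-frequency component \dots carries a non-negligible fraction of the $L^2$-mass'': all of $h^{(n)}(\infty)$ sits near $\eta_n$, so its $H^s$ norm is exactly comparable to $\eta_n^{s}\|h^{(n)}(\infty)\|_{L^2}$. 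With that in hand your coefficient should be $a_n\sim \alpha_n\,\eta_n^{-s}M_n^{-1}$ (not $\eta_n^{-s-1/2}$), matching the paper and making your $H^s$ computation come out to $\sum\alpha_n^2$ rather than $\sum n^{-2}\eta_n^{-1}$.

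One more gap: your dominated-convergence argument for $F(t)\to 0$ is not quite closed, since $h_\infty\in L^2$ does not by itself give summability of $|k|^{-2}\sup_\eta|\tilde h_\infty(k,\eta)|^2$. The cleaner route, again exploiting the support picture above, is to note that $h^{(n)}(t)$ is supported in $k\le k_{0,n}+1$ and in $\eta$ near $\eta_n$; hence $\tilde h^{(n)}(t,k,kt)$ vanishes for $t>T_n'$ except at $k=0$, which does not contribute to $F$. Then split $h(t)=\sum_{n\le N}+\sum_{n>N}$ for $t>T_N'$: the first sum contributes nothing to $F$, and the tail is controlled in $L^2$ (hence its force in $L^2$) by $\sum_{n>N}|\alpha_n|^2\to 0$.
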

We thus show that indeed (linear) Landau damping in the sense \ref{item:physical} may persist despite blow-up,
that the norm inflation mechanism is (secondary) linear and that it may not only result in
inflation but blow-up. 
Here, the choice of $f_0\equiv 0$ serves to preserve the Fourier support (of
some modes) under the evolution. Similarly to results of the author and Deng in
fluid mechanics \cite{dengZ2019} we expect that an extension to more general
$f_0$ and perturbations satisfying a smallness condition such
as in \cite{bedrossian2016nonlinear}, where $(2.10)$ reads 
\begin{align*}
  \epsilon \langle k_0,\eta_0\rangle^{-R} \exp(3 (K_m'\epsilon \eta_0)^{1/3})=1,
\end{align*}
is possible with some effort. However, the above blow-up result crucially relies
on considering sequences of $\eta_0$ tending to infinity. Here, in the setting
of inviscid damping the dynamics differ qualitatively \cite{dengZ2019} and we thus
expect that also in the present setting such an extension would require several
new techniques.\\

It would be very interesting to know whether the results of Theorem \ref{thm:main} or similar behaviors persist also in the
nonlinear problem. Here, the case of a single echo chain (around a different not
self-similar solution and at bounded frequency) has been established by
Bedrossian in \cite{bedrossian2016nonlinear}. However, in order to better
understand the possible behavior in lower regularity it remains a challenge to
understand the problem of echo chains at higher frequencies and the interaction
of (countably many) chains.

For an in-depth discussion of the history and the derivation of the Vlasov-Poisson
equations and an overview of the literature we refer to the seminal works of
Villani and Mouhot \cite{Villani_script, Villani_long} and of Bedrossian,
Masmoudi and Mouhot \cite{bedrossian2016landau}.

The remainder of our article is structured as follows:
\begin{itemize}
\item In Section \ref{sec:selfsimilar} we introduce families of self-similar
  non-stationary solutions of the Vlasov-Poisson equations in any dimension as
  well as the specific solutions in $1+1$ dimensions which we study in the
  following.
\item In Section \ref{sec:chains} we briefly recall the underlying physical echo
  mechanism. Furthermore, we introduce a toy model to estimate the possible norm
  inflation due to a chain of echoes.
\item In Section \ref{sec:better} we introduce the linearized problem \eqref{eq:LVP} and study
  its evolution along a chain of echoes. Here we use the compact frequency
  support of the underlying solution $f_*$ to clearly separate times of echoes and
  construct the chain by an iteration.
\item In Section \ref{sec:blow-up} we construct initial data which exhibits norm
  inflation in Gevrey $3$ regularity due to a single echo chain of arbitrarily
  long length. Furthermore, we construct initial data in Gevrey $3$ regularity
  which exhibits infinitely many echo chains and \emph{diverges} in Sobolev
  regularity, but whose force field \emph{converges} in $L^2$.
\item In Section \ref{sec:stability} we complement the norm inflation and
  blow-up result by a stability result in the Gevrey class $\mathcal{G}_3$ with
  large constant.
\end{itemize}

\subsubsection*{Acknowledgments}
Christian Zillinger's research is supported by the ERCEA under the grant 014
669689-HADE and also by the Basque Government through the BERC 2014-2017 program
and by Spanish Ministry of Economy and Competitiveness MINECO: BCAM Severo Ochoa
excellence accreditation SEV-2013-0323.

\section{Self-similar Non-stationary States}
\label{sec:selfsimilar}

It is a classical result that the linearized problem around homogeneous states
$f^0=f^0(v)$ reduces to a Volterra equation for the Fourier transform of the
density $\rho=\int f dv$ and for analytic data can be solved by employing
Laplace transforms \cite{Villani_script}. Furthermore, it turns out that this
linear problem is asymptotically stable also in Sobolev regularity. In contrast
the nonlinear problem is stable in Gevrey 3 regularity and unstable in lower
regularity \cite{bedrossian2016nonlinear, bedrossian2013landau}.

We propose that an intuitive explanation of this dichotomy is given by the
existence of a large family of nearby \emph{self-similar non-stationary states}
\begin{align}
  \label{eq:3}
  f_*(t,x,v)= \epsilon g(x-tv) h(v) + f^0(v)
\end{align}
for $t>T>0$ which are initially arbitrarily close to homogeneous states (in
analytic regularity) but for which the linearized problem is stable in the
Gevrey 3 class and asymptotically unstable in any weaker regularity class.\\

We remark that any function $f_*$ of the structure \eqref{eq:3} is a solution of
the free transport equations. Thus, in order to also be a solution of the
Vlasov-Poisson equations we need to show that the corresponding force field
vanishes. Since $F$ depends on $f_*$ only in terms of $\rho=\int f_* dv$, a
sufficient condition is given by requiring $g$ and $h$ to be compactly supported in
Fourier space.

\begin{lem}
  \label{lem:trivial}
  Let $0\leq T_1 \leq T_2$ and define the following set in Fourier space:
  \begin{align*}
    \Omega(T_1,T_2)=\left\{(k,\eta)\in \Z^{d} \times \R^d: \exists t \in [T_1,T_2] \text{ s.t. } \eta-kt=0\right\}.
  \end{align*}
  If $f_0 \in L^1$ satisfies $\tilde{f}_0=0$ on $\Omega(T_1,T_2)$, then
  \begin{align*}
    f(t,x,v)=f_0(x-tv,v)
  \end{align*}
  is a self-similar solution of the Vlasov-Poisson equations \eqref{eq:1} on the
  time interval $(T_1,T_2)$.
\end{lem}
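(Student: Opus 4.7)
The plan is to exploit the observation, already made in the paragraph preceding the lemma, that any function of the self-similar form $f(t,x,v)=f_0(x-tv,v)$ automatically satisfies the free transport equation $\partial_t f+v\cdot\nabla_x f=0$. Hence it solves the full Vlasov-Poisson system on $(T_1,T_2)$ if and only if the force field $F(t,x)$ vanishes on that interval, which in turn is implied by $\rho(t,x)=\int f(t,x,v)\,dv\equiv 0$ for $t\in(T_1,T_2)$.

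The concrete computation I would do is to pass to Fourier variables. Writing $\tilde f_0(k,\eta)$ for the joint Fourier transform of $f_0$ in $(x,v)$ and changing variables $y=x-tv$, one gets
\begin{align*}
 \widetilde{f(t,\cdot,\cdot)}(k,\eta)=\int e^{-ikx-i\eta v}f_0(x-tv,v)\,dx\,dv = \tilde f_0(k,\eta+tk).
\end{align*}
Since the density is obtained by integrating out $v$, which in Fourier corresponds to evaluating at $\eta=0$, we find
\begin{align*}
 \hat\rho(t,k) = \widetilde{f(t,\cdot,\cdot)}(k,0) = \tilde f_0(k,tk).
\end{align*}
For any $t\in[T_1,T_2]$ the point $(k,tk)$ lies in $\Omega(T_1,T_2)$ by definition, so the hypothesis $\tilde f_0|_{\Omega(T_1,T_2)}\equiv 0$ forces $\hat\rho(t,k)=0$ for all $k\in\Z^d$ and all $t\in[T_1,T_2]$. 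Consequently $\rho\equiv 0$ on this time interval, whence $F=\nabla W\ast\rho\equiv 0$ and the Vlasov-Poisson equation reduces to free transport, which $f$ already solves. An integrability remark is needed so that the pointwise evaluation $\tilde f_0(k,tk)$ makes sense; since $f_0\in L^1(\T^d\times\R^d)$, its Fourier transform is continuous by Riemann-Lebesgue, so evaluation on the curve $\eta=tk$ is well defined.

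There is no real obstacle here: the statement is essentially a bookkeeping exercise checking that the prescribed Fourier support condition is exactly the one that kills the $v$-integral of a self-similar transported profile. The only place one has to be careful is to fix a consistent sign convention for the Fourier transform so that the shift $x\mapsto x-tv$ translates into the shift $\eta\mapsto\eta+tk$ (or $\eta-tk$, depending on convention) that matches the definition of $\Omega(T_1,T_2)$.
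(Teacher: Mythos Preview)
Your proof is correct and follows essentially the same approach as the paper: observe that $f$ solves free transport, compute $\hat\rho(t,k)=\tilde f_0(k,kt)$, and use the Fourier support hypothesis to conclude $\rho\equiv 0$ and hence $F\equiv 0$. The paper's version is slightly terser, while you add the useful remark that $f_0\in L^1$ makes $\tilde f_0$ continuous so the pointwise evaluation is meaningful.
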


\begin{proof}
  We note that by construction $f$ satisfies
  \begin{align*}
    \dt f + v \p_x f =0.
  \end{align*}
  Thus we only need to show that $F[\int f dv]$ identically vanishes. We observe
  that for any $k \in \Z^{d}$ it holds that
  \begin{align*}
    \int e^{-ikx} \int f(t) dv dx = \tilde{f_0}(k,kt)=0,
  \end{align*}
  since $(k,kt) \in \Omega(T_1,T_2)$. Therefore $\int f dv=0$, which implies the
  result.
\end{proof}

The two main examples of interest are given by a single wave packet solution
\begin{align}
  f_0(t,x,v)= \epsilon \sin(e_1\cdot (x-tv)) \psi(v)
\end{align}
and by its perturbation by another wave packet
\begin{align}
  g(t,x,v)= f_0(t,x,v) + \epsilon^2  \sin(k_0\cdot (x-tv)) \sin(\eta_0 \cdot v) \psi(v).
\end{align}
Here $\psi$ is a function which is compactly supported in Fourier space inside a
ball $B_R(0)$. As we show in the following corollary, the function $f_0$ is a self-similar
non-stationary solution of the Vlasov-Poisson equations on $(\delta,\infty)$.
The function $g$ is a small high-frequency perturbation of this solution and is
a solution of the Vlasov-Poisson equations until a time $T=T(k_0,\eta_0,\delta)$
at which the perturbation becomes resonant and results in a perturbation of
the force field. Our main aim in Sections \ref{sec:better} and \ref{sec:blow-up}
is to show that this resonance actually causes a \emph{chain of echoes} and
norm inflation in Gevrey 3 regularity.

\begin{cor}
  \label{cor:trivial}
  Let $\psi(v)$ be compactly supported in Fourier space inside a ball $B_R$
  around zero. Then the function
  \begin{align*}
    f_0(t,x,v)= \epsilon \sin(e_1\cdot (x-tv)) \psi(v)
  \end{align*}
  is a self-similar solution of the nonlinear Vlasov-Poisson equations
  \eqref{eq:1} on the time-interval $(R,\infty)$. Furthermore, there exists a
  Gevrey regular solution $f$ of the Vlasov-Poisson equations \eqref{eq:1} on
  $(0,\infty)$ which agrees with $f_*$ on $(R,\infty)$.
\end{cor}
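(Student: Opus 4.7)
The first assertion is an immediate application of Lemma \ref{lem:trivial}. The plan is to write $f_0(t,x,v) = F(x - tv, v)$ with the initial profile
\[
F(x,v) := \epsilon \sin(e_1 \cdot x)\,\psi(v),
\]
and to check the support hypothesis of the lemma. The Fourier transform $\tilde F(k,\eta)$ is nonzero only if $k = \pm e_1$ (the Fourier modes carried by the sine) and $\eta \in B_R$ (by the assumption on $\hat\psi$). For $(k,\eta) \in \Omega(R,\infty)$ with $k = \pm e_1$, however, $|\eta| = t > R$, so $\hat\psi(\eta) = 0$. Hence $\tilde F \equiv 0$ on $\Omega(R,\infty)$, and Lemma \ref{lem:trivial} yields self-similarity on $(R,\infty)$.

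For the extension to a Gevrey regular solution on $(0,\infty)$, the plan is to solve Vlasov-Poisson \emph{backward} in time on $[0,R]$ with terminal data
\[
f_0(R, x, v) = \epsilon \sin(e_1 \cdot (x - Rv))\,\psi(v).
\]
This datum is a product of a trigonometric polynomial in $x$ and a Schwartz function in $v$ with compactly supported Fourier transform, hence Gevrey regular of every index. Using standard local-in-time well-posedness for Vlasov-Poisson in a Gevrey class (in the spirit of \cite{Villani_long, bedrossian2016landau}) together with the reversibility of the equation under $v \mapsto -v$, $t \mapsto -t$, one obtains a Gevrey regular solution on some interval $[R - \tau, R]$, which is then iterated to cover the compact interval $[0, R]$. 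The two pieces are finally glued at $t = R$ by uniqueness of the Vlasov-Poisson flow in Gevrey regularity.

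The main technical obstacle is the Gevrey propagation estimate on $[0,R]$: one must control the nonlinear force-field contribution in a Gevrey norm while allowing a controlled loss in the Gevrey radius. Since $\epsilon$ is small and $[0,R]$ is compact, this is handled by a contraction in a Gevrey Banach space or by a Cauchy--Kowalevski-type estimate; the finiteness of the time interval and the high regularity of $\psi$ make the loss in Gevrey radius admissible. No echo-type mechanism enters on this short time window, since by construction the density $\rho = \int f\,dv$ is only non-vanishing for $t \in [0,R]$ and the corresponding force field stays of size $O(\epsilon)$ throughout, so $\epsilon$ can be fixed small enough once $R$ is chosen.
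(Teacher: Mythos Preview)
Your proposal is correct and follows essentially the same approach as the paper: verify the Fourier-support hypothesis of Lemma~\ref{lem:trivial} for the first claim, then invoke local well-posedness of Vlasov--Poisson at the (analytic) terminal datum $f_0(R,\cdot,\cdot)$ and glue. The paper's proof is terser---it simply notes that $f_0(R,x,v)$ is analytic and cites local well-posedness without discussing the backward iteration or any smallness of $\epsilon$---so your additional care about the contraction/Cauchy--Kowalevski step on the compact interval $[0,R]$ is a reasonable elaboration rather than a departure.
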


\begin{proof}
  The function $f_0$ is a solution of the Vlasov-Poisson equations on
  $(R,\infty)$ by Lemma \ref{lem:trivial}. We further note that $f_0(R,x,v)$ is
  analytic and may thus use the local well-posedness theory of the
  Vlasov-Poisson equations to solve the equations on $(0,R)$. Combining both
  solutions we obtain a global solution.
\end{proof}

In the following we will consider $R=\delta \ll 1$ and study the linearized
problem around such a function $f_0$ on $(\delta,\infty)$. Let thus
$f=f_0+\epsilon h$ be a solution of the Vlasov-Poisson equations
\begin{align*}
  \dt f + v \cdot \nabla_x f + F \cdot \nabla_v f =0.
\end{align*}
Then we obtain that
\begin{align*}
  \dt h + v \cdot \nabla_x h + F[h] \cdot \nabla_v f_0 = \mathcal{O}(\epsilon).
\end{align*}
In the linearization we now neglect the $\mathcal{O}(\epsilon)$ error term and
further switch to coordinates $(t,x+tv,v)$ moving with free transport:
\begin{align}
  \label{eq:self}
  \begin{split}
    \dt h + F[h](x-tv) \cdot (\nabla_v-t\nabla_x) (\cos(x) \psi(v))=0, \\
    F[h](x)=\nabla_x W * \left( \int h(t,\cdot-tw,w) dw \right).
  \end{split}
\end{align}
We note that in these coordinates the self-similar solution $f_*$ is stationary.
However, unlike in the case of a homogeneous solution $f_0=f_0(v)$ here $t
\nabla_x f_0(x,v)$ is non-trivial and potentially very large for large times.
Furthermore, the multiplication by $\cos(x)$ introduces a coupling between
neighboring frequencies $k-1, k, k+1$ with respect to $x$.

As we will see in Sections \ref{sec:better} and \ref{sec:blow-up} this nearest
neighbor interaction allows us to propagate resonances, which make this problem
unstable in any sub Gevrey 3 regularity class. The underlying physical mechanism
here is given by \emph{chains} of \emph{plasma echoes}, which we introduce in
terms of a model problem in the following Section \ref{sec:chains}.

\section{Plasma Echoes and Echo Chains}
\label{sec:chains}
We note that the Vlasov-Poisson equations \eqref{eq:1}
\begin{align*}
  \dt f + v \cdot \nabla_x f + F \cdot \nabla_v f =0
\end{align*}
can be interpreted as a transport problem on $\T^d\times \R^d$ by the
divergence-free vector field $(F(t,x),v) \in \R^{2d}$. Thus formally all $L^p$
norms of $f$ are conserved and the evolution is invertible. Nevertheless one
observes in experiments that asymptotically the force field $F$ decays in time
very quickly. This asymptotic convergence of the force field to zero is known as
Landau damping after the works of Lev Landau \cite{Landau} who in the 1940s
mathematically discovered this effect and established it for the linearized
Vlasov-Poisson equations around smooth homogeneous (i.e. $x$ independent)
initial data. The seeming contradiction between asymptotic convergence and
invertibility can be resolved by noting that under the free transport dynamics
$f$ weakly converges to its $x$ average (but not strongly) and thus the force
field strongly decays.

However, while the free transport dynamics are a good approximation for some
times, as we have seen in Section \ref{sec:selfsimilar}, this is not the case for
all times. Indeed, suppose that $f(t=0)$ is concentrated at a high frequency
$(k,\eta)$, then
\begin{align*}
  \tilde{f}(t,k,\eta)= \tilde{f}(0,k,\eta-kt)
\end{align*}
will be concentrated near the zero frequency in $v$ around the time
$t=\frac{\eta}{k}$ and hence yield a significant contribution to the force
field. This resonance mechanism also underlies the seminal physical experiments
by Malmberg et al \cite{malmberg1968plasma} in the 1960s. The following sketch
is adapted from the lectures notes of Villani \cite[page 110]{Villani_long}. For
a more in-depth discussion we further refer to Villani and Mouhot's article
\cite{Villani_long} on Landau damping as well as the articles by Bedrossian,
Mouhot and Masmoudi \cite{bedrossian2013landau}, \cite{bedrossian2016nonlinear}.

In the experimental setup a plasma is confined to a cylinder and one measures
its reaction to frequency localized perturbations. For simplicity of notation in
the following we will describe the underlying mechanism in terms of a periodic
box $\T \times\R$ (identifying the angle with the periodic direction). Given a
plasma at rest, at time zero we introduce a frequency localized perturbation
\begin{align*}
  \epsilon e^{ikx}
\end{align*}
to the density $f$, which then evolves by free transport as
\begin{align*}
  \epsilon e^{ik+ikt v}.
\end{align*}
In particular, since the perturbation is moving to higher and higher frequencies
we observe a damping of the perturbation of the force field. At a later time
$\tau$ we add another perturbation localized at the frequency $l-k$
\begin{align*}
  \delta e^{i(l-k)x},
\end{align*}
which again evolves by transport and is damped. While both perturbations do not
interact in the linear problem, the quadratic nonlinearity of the Vlasov-Poisson
equations introduces a correction of the form
\begin{align*}
  \delta \epsilon C e^{ilx + ik\tau y},
\end{align*}
which then evolves as
\begin{align*}
  \delta \epsilon C e^{ilx + (ik \tau + il(t-\tau))v}.
\end{align*}
While this contribution initially is quadratically small and at high frequency,
given a suitable choice of signs of $k$ and $l$ this contribution may unmix and
become resonant at the time $t$ such that
\begin{align*}
  k\tau + l (t-\tau)=0.
\end{align*}
Thus, while both individual perturbations are quickly damped (at the level of
the force field) their nonlinear interaction results in a peak of the force
field at the frequency $l$ at a later time $t$. The perturbations result in a
\emph{plasma echo}.

We note that this resonance is not present in the linearized problem around
$f_0\equiv 0$ (that is, the free transport problem) or the linearization around
homogeneous states $f_0=f_0(v)$. It is thus considered a nonlinear phenomenon
and suggests that the nonlinear dynamics may differ strongly from the linearized
dynamics for large times. In particular, while the linearized problem around
homogeneous states is stable in Sobolev regularity, the nonlinear problem
requires Gevrey 3 regularity \cite{Villani_long}, \cite{bedrossian2013landau},
\cite{bedrossian2016nonlinear}.

However, as the main result of this article we show that echoes are a
\emph{linear effect} when one considers non-homogeneous \emph{self-similar}
solutions. That is, in any arbitrarily small, smooth neighborhood of homogeneous
solutions there are time-dependent self-similar solutions $f_*$ to the nonlinear
Vlasov-Poisson equations whose (linearized) perturbations result in resonances.
Furthermore, this linearized problem does not only exhibit single echoes but
\emph{chains of echoes}, where the first echo excites another mode, which
results in an echo at a later time, which in turn excites another mode etc. In
Section \ref{sec:better} we show that the linearized problem \eqref{eq:self}
with initial data localized near the frequency $(k_0,\eta_0)$ undergoes an echo
chain
\begin{align*}
  k_0 \rightarrow k_0-1 \rightarrow k_0-2 \rightarrow \dots \rightarrow 0
\end{align*}
of length $k_0$ and as a result exhibits norm inflation in Gevrey 3
regularity. Furthermore, combining infinitely many echo chains of increasing
length in Section \ref{sec:blow-up} we construct smooth initial data
which exhibits \emph{blowup} in Sobolev regularity as time tends to infinity but
whose force field \emph{converges} in $L^2$.

\subsection{A Model Problem}
\label{sec:echo}

We are interested in the evolution of the linearized Vlasov-Poisson equations
\eqref{eq:self} around a self-similar solution $f_*$ with $f_0(v)=0$:
\begin{align}
  \dt h + F[\int h(t,X-t\sigma,\sigma) dv](X=x-tv) (\p_v - t\p_x) \cos(x)\psi(v) =0.
\end{align}
We remark that in the linearized problem around homogeneous states there is no
$\cos(x)$ and the equation may be explicitly solved by using Fourier and Laplace
transforms \cite{Villani_script}. As the multiplication by $\cos(x)$ introduces
a coupling between neighboring frequencies resonances may excite nearby modes
and in turn lead to resonances at later times. Here, in contrast to the setting
of fluid dynamics considered in \cite{dengZ2019} the compact support of $\psi$
in frequency and the lower dimensional structure of the map $h \mapsto F[h]$
yields a much simpler control of higher order Duhamel iterates which we use in
Section \ref{sec:better}.\\

In order to introduce ideas and to see what growth to expect we first discuss
the evolution for a \emph{toy model} (see also \cite{Villani_long, bedrossian2016nonlinear} for some other toy models).

We consider an initial perturbation $h_0$ which is concentrated in Fourier space
in $\{k_0\}\times [\eta_0-1/2,\eta_0+1/2]$ for some large $k_0,\eta_0$. If those
have opposite signs (and $|\eta|>1/2$), then by Corollary \ref{cor:trivial} the
function $h$ is a stationary solution (in coordinates moving with the free
transport) and hence trivially stable. We thus assume that both $\eta_0$ and
$k_0$ are positive and $\eta_0$ is much larger than $k_0$. Then by Lemma
\ref{lem:trivial} the function $h$ is a stationary solution until the first
critical time
\begin{align*}
  T_{k_0}: \eta_0-1/2-k_0 T_{k_0}=0.
\end{align*}
Following a similar approach as in Bedrossian's work
\cite{bedrossian2016nonlinear} we consider the Fourier transform of equation
\eqref{eq:self}:
\begin{align}
  \label{eq:self2}
  \dt \tilde{h}(t,k,\eta)= \sum_{l=k\pm 1} l\hat{W}(l) \tilde{f}(t,l,lt) ((\eta-lt)+t \text{sgn}(k-l)) \tilde{\psi}(\eta-lt).
\end{align}
In our toy model we now freeze all modes except the mode $k_0-1$ and thus obtain
that for $T>T_{k_0}$
\begin{align*}
  \tilde{h}(T,k_0-1,\eta)= \int_{T_{k_0}}^T k_0 \hat{W}(k_0) \tilde{f}(0,k_0, k_0t) (1+t) \tilde{\psi}(\eta-k_0t) dt.
\end{align*}
Choosing $T=T_{k_0}'$ such that $\eta_0+1/2-k_0 T_{k_0}'=0$, we formally
approximate
\begin{align*}
  & \quad \int_{T_{k_0}}^T k_0 \hat{W}(k_0) \tilde{h}(0,k_0, k_0t) (1+t) \tilde{\psi}(\eta-k_0t) dt\\
  &\approx k_0 \hat{W}(k_0) \frac{\eta_0}{k_0} \int_{T_{k_0}}^{T} \tilde{h}(0,k_0, k_0t) \tilde{\psi}(\eta-k_0t) dt\\
  &= k_0 \hat{W}(k_0) \frac{\eta_0}{k_0} \frac{1}{k_0} \int_{\eta_0-1/2}^{\eta_0+1/2} \tilde{h}(0,k_0, \tau) \tilde{\psi}(\eta-\tau) d\tau, \\
  &=  \hat{W}(k_0) \frac{\eta_0}{k_0} \int_{\R} \tilde{h}(0,k_0, \tau) \tilde{\psi}(\eta-\tau) d\tau,
\end{align*}
where we used the compact support of $\tilde{f}(0,k_0,\cdot)$ in the last step.

Thus the perturbation $f_0=\epsilon \cos(x) \psi(v)$ which we linearize around
and our initial perturbation $h(t=0)$ near frequency $(k_0,\eta_0)$ results in
an \emph{echo} at frequency $k_0-1$ during the time $(T_{k_0}, T_{k_0}')$. We
note that here we obtained several factors:
\begin{itemize}
\item $k_0 \hat{W}(k_0)$ is determined by the force modeled, which is
  proportional to $k_0^{-1}$ in the case of gravitational or Coulomb
  interaction.
\item $\frac{\eta_0}{k_0}$ is an approximation of $(1+t)$ near the resonant
  time.
\item $\frac{1}{k_0}$ accounts for the change of variables in time or
  equivalently the speed with which we travel through the support of
  $\tilde{\psi}$.
\end{itemize}
If we now further consider $\psi$ as $\sigma$ times an approximate identity our
toy model becomes
\begin{align}
  \tilde{h}(T_{k_0}',k_0-1,\eta) \approx \hat{W}(k_0) \frac{\eta_0}{k_0} \sigma \ \tilde{h}(T_{k_0},k_0,\eta). 
\end{align}
This contribution at frequency $k_0-1$ now in turn interacts with our underlying
solution $f_*= \epsilon \cos(x) \psi(v)$ and results in another \emph{echo} at
frequency $k_0-2$ during the time $(T_{k_0-1}, T_{k_0-1}')$. We thus obtain not
just a single echo but rather an \emph{echo chain}
\begin{align*}
  k_0 \rightarrow k_0-1 \rightarrow k_0-2 \rightarrow \dots \rightarrow 0
\end{align*}
along the sequence of times $T_{k_0}, T_{k_0-1}, \dots$ which results in
\begin{align}
  \tilde{h}(T_1',0,\eta) \approx \tilde{h}(T_{k_0},k_0,\eta) \prod_{j=1}^{k_0}\hat{W}(j) \frac{\eta_0}{j} \sigma. 
\end{align}
Approximating the last factor by Stirling's formula and choosing $k_0=\sqrt[3]{
  \sigma\eta_0}$ to maximize it, we obtain
\begin{align*}
  \prod_{j=2}^{k_0}\hat{W}(j) \frac{\eta_0}{j} \sigma \approx \exp(\sqrt[3]{\sigma \eta}),
\end{align*}
which suggests Gevrey 3 regularity as a critical class (see also
\cite{bedrossian2013landau}, \cite{bedrossian2016nonlinear} for a similar
derivation for a toy model of the nonlinear problem). We stress that this growth
is due to the mode $k=-1$ in $f_0$ repeatedly interacting with the echoes
created by $h$ and thus resulting in a self-sustained \emph{echo chain} of
maximal length.

In the following we will show that this toy model is accurate in the sense that
also our linear problem \eqref{eq:self} exhibits echo chains of maximal length.

\section{Echo Chains along Self-similar Solutions}
\label{sec:better}
Building on the intuition developed in the toy model of Section \ref{sec:echo}
we now return to the linearized Vlasov-Poisson equations around self-similar
solutions.

We recall that the full Vlasov-Poisson equations on $\T^d \times \R^d$ are given
by
\begin{align}
  \label{eq:VP}
  \dt f + (v , F) \cdot \nabla_{x,v} f =0.
\end{align}
By the results of Section \ref{sec:selfsimilar}, if $\psi \in \mathcal{S}(\R^d)$
has compact Fourier support inside $B_{\delta}(0)$, then the function
\begin{align}
  f_*(t,x,v)= f_0(v) + \epsilon \cos(e_1\cdot(x-tv)) \psi(v)
\end{align}
is a solution of the Vlasov-Poisson equations \eqref{eq:VP} on
$(\delta,\infty)$.

Considering a perturbation $f=f_* + \epsilon' h$ we obtain
\begin{align}
  \dt h + v \cdot \nabla_x h + F[h] \cdot \nabla_v f_* + \epsilon' F[h]\cdot \nabla_v h =0,
\end{align}
where we ignore the last part in the linearization. We next change coordinates
to $(x+tv,v)$ and in these new coordinates we obtain
\begin{align}
  \label{eq:LVP}
  \tag{LVP}
  \dt h + F[\int h(t,x-tw,w)dw] (\nabla_{v}-t \nabla_x)(f_0(v)+\epsilon \cos(x_1) \psi(v))=0.
\end{align}
Let now $\eta \in \R^d, k \in \Z^d$, then the Fourier transform satisfies
\begin{align}
  \label{eq:7}
  \begin{split}
    \dt \tilde{h}(t,k,\eta)&= - \hat{F}(t,k) \cdot (\eta-kt) \hat{f}_0(\eta-kt) \\
    & \quad - \sum_{l=k \pm 1} \hat{F}(t,l) (\eta-lt - t\text{sgn}(l-k)) \hat{\psi}(\eta-lt), \\
    \hat{F}(t,k)&= k \hat{W}(k) \tilde{h}(t,k,kt).
  \end{split}
\end{align}
We remark that $\hat{\psi}(\eta-lt)$ is supported in the set where
$|\eta-lt|\leq \delta \Leftrightarrow |\frac{\eta}{l}-t|< \frac{\delta}{|l|}$.
We will later use that for $\eta$ large and $t$ inside a small interval this can
only be satisfied for at most one $l$ and that for $t$ large it holds that $|\eta-lt| + t \approx t$.

If $\tilde{\psi}$ (or $\sigma$ in the toy model) is sufficiently small it seems
reasonable to expect that on a given time interval $(T_{k_0}, T_{k_0}')$ the first
Duhamel iteration is a good approximation of the evolution by the Vlasov-Poisson
equations. Indeed, this is true and shown in \cite{bedrossian2016nonlinear} for
the setting of plasma physics near homogeneous solutions and in \cite{dengZ2019} for the setting of fluid
dynamics. However, ``sufficiently small'' here turns out to be a restriction
depending on $\eta_0$ which roughly speaking is of the form
\begin{align*}
  \sigma < \frac{\log(\eta_0)}{\eta_0}.
\end{align*}
For example, condition (2.10) in \cite{bedrossian2016nonlinear} reads
\begin{align*}
  \sigma \eta_0^{-R} \exp(\sqrt[3]{\sigma \eta_0})=1.
\end{align*}
While this restriction allows to consider relatively large $\eta_0$ and derive
norm inflation, for given $\sigma$ we may only consider $\eta_0$ inside a
compact set and the possible growth is bounded in terms of $\sigma$. In
particular, with such a constraint one cannot consider sequences of $\eta_0$
tending to infinity which is crucial to construct critical spaces and solutions
which exhibit blow-up and convergence at the same time. Furthermore, following
the heuristic of the toy model of Section \ref{sec:echo} the time of the last
resonance $T_1 \approx \frac{\eta_0}{1}$ is uniformly bounded and thus of
limited use in predicting asymptotic behavior as $t\rightarrow \infty$.\\ 

Thus, as a main result of \cite{dengZ2019} Deng and the author showed that such a restriction can be
removed in the linearized fluids setting near Couette flow and that the dynamics are qualitatively
different without such a constraint.

In the present setting of the linearized Vlasov-Poisson equations the compact
support and simpler structure of the mapping $h \mapsto F$ allows for an easier
control of perturbations and a more transparent view of the resonance chain
mechanism. We remark that in \cite{bedrossian2016nonlinear} Bedrossian
considered $\tilde{\psi}$ with exponential decay but without compact support, which does not allow for this simplification.

In the following we consider the one-dimensional problem and with slight abuse
of notation write $h(t,k,\eta)$ instead of $\tilde{h}(t,k,\eta)$. For later
reference we also note the Duhamel integral formulation of \eqref{eq:7}:
\begin{align}
  \label{eq:Duhamel}
  \begin{split}
    \tilde{h}(T_1,k,\eta)- \tilde{h}(T_0,k,\eta) &= - \int_{T_0}^{T_1} k \hat{W}(k) \tilde{h}(t,k,kt) \cdot (\eta-kt) \hat{f}_0(\eta-kt) \\
    & \quad - \sum_{l=k \pm 1} l \hat{W}(l) \tilde{h}(t,l,lt) (\eta-lt -
    t\text{sgn}(l-k)) \hat{\psi}(\eta-lt) dt.
  \end{split}
\end{align}

The following Lemmas \ref{lem:obs1} and \ref{lem:obs3} use that the
right-hand-side of \eqref{eq:7} and \eqref{eq:Duhamel} only depends on
$h(t,\cdot,\cdot)$ in terms of some frequencies and due to the compact support
of $\hat{\psi}$ in turn only changes some frequencies.

\begin{lem}[Dependence on Initial Data]
  \label{lem:obs1}
  Let $I\subset \R$ be a given time interval and for any $l \in \N$ define
  \begin{align*}
    U_l:= \{lt: t \in I\} = l I.
  \end{align*}
  Let $\mathbb{P}_l$ denote the characteristic function of $U_l$. Then the
  evolution of $h$ by \eqref{eq:LVP} on the time interval $I$ depends only on
  the initial data restricted to $\{l\} \times U_l$. That is, if $h$ is the
  solution of equation \eqref{eq:LVP} on $I$ with initial data $h_0$ and $h_*$
  is the solution with initial data $h_{0,*}(l,\eta) = P_l(\eta)h_0(l,\eta)$,
  then
  \begin{align*}
    h(t,l,\eta)- h_*(t,l,\eta)= (1-P_l) h_0(l,\eta) 
  \end{align*}
  for all $t \in I$.
\end{lem}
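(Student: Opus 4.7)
The strategy is to exploit linearity of \eqref{eq:LVP} together with the causal structure of \eqref{eq:7}. Setting $g := h - h_*$, the function $g$ solves \eqref{eq:LVP} with initial datum $g_0(l,\eta) = (1 - P_l(\eta))\,h_0(l,\eta)$, so the lemma is equivalent to showing that $g$ is stationary on $I$, namely $\partial_t g(t,k,\eta) = 0$ for every $(k,\eta)$ and every $t \in I$.

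Inspecting \eqref{eq:7}, the right-hand side driving $\partial_t g(t,k,\eta)$ depends on $g$ only through the force evaluations $\hat F[g](t,l) = l\,\hat W(l)\,g(t,l,lt)$ with $l \in \{k,k\pm 1\}$. The entire dynamics therefore reads $g$ only along the diagonal traces $\Phi(t,l) := g(t,l,lt)$, so the problem reduces to proving that $\Phi(t,l) \equiv 0$ for every $l \in \Z$ and every $t \in I$.

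Write $T_0 := \inf I$. The construction of $g_0$ gives $\Phi(T_0, l) = (1 - P_l(lT_0))\,h_0(l,lT_0) = 0$, since $lT_0 \in U_l$ and hence $P_l(lT_0)=1$. Specialising the Duhamel formula \eqref{eq:Duhamel} to $\eta = lt$ then expresses $\Phi(t,l)$ as the time integral, from $T_0$ to $t$, of a linear combination of $\Phi(s,l)$ and $\Phi(s, l\pm 1)$ against smooth, locally bounded kernels built from $\hat W$, $\hat\psi$ and $\hat f_0$. This is a linear, nearest-neighbour Volterra system with vanishing initial data, so a standard contraction argument on a short subinterval of $I$ forces $\Phi \equiv 0$ there, and iteration covers all of $I$.

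Once $\Phi$ vanishes on $I$, the force $\hat F[g]$ vanishes on $I$, so the right-hand side of the evolution equation for $g$ is identically zero on $I$. Integrating from $T_0$ gives $g(t,k,\eta) = g_0(k,\eta) = (1 - P_k(\eta))\,h_0(k,\eta)$ for every $t \in I$, which is the desired identity. The only delicate point I foresee is closing the Volterra argument uniformly across all modes $l \in \Z$ simultaneously, but the nearest-neighbour structure of the coupling and the compact support of $\hat\psi$ localise the interaction sufficiently for a standard $L^\infty_t$ uniqueness argument to apply.
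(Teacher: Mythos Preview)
Your argument is correct, but it takes a longer route than necessary. The key observation you identify---that the right-hand side of \eqref{eq:7} reads $g$ only along the diagonal traces $(l,lt)$ with $lt\in U_l$, where $g_0=(1-P_l)h_0$ vanishes---already finishes the proof without any Volterra machinery. The paper exploits this directly: since $(1-P_l)h_0$ is time-independent and vanishes on every diagonal trace, one has $\mathrm{rhs}[h]=\mathrm{rhs}[h-(1-P_l)h_0]$ pointwise, and hence $h_*:=h-(1-P_l)h_0$ is itself a solution of \eqref{eq:LVP} with initial data $P_l h_0$. This is a one-line algebraic verification. Your approach instead assumes $h_*$ is given as ``the'' solution with data $P_l h_0$, forms $g=h-h_*$, and then re-derives the vanishing of $\Phi(t,l)=g(t,l,lt)$ via a nearest-neighbour Volterra system with zero data and a contraction/iteration step. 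That works, but it effectively re-proves uniqueness for a system whose solution you could have written down explicitly. One small presentational point: in your Duhamel step with $\eta=lt$, the ``initial'' term is $g(T_0,l,lt)=g_0(l,lt)$, not $\Phi(T_0,l)=g_0(l,lT_0)$; both vanish for the same reason ($lt\in U_l$ for every $t\in I$), but it is worth stating the former, since that is what the integral formula actually needs.
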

\begin{proof}
  Let $h$ be a given solution of \eqref{eq:LVP}. Since $P_l$ and $h_0$ do not
  depend on time
  \begin{align*}
    \dt h = \dt (h- (1-P_l) h_0(l,\eta)).
  \end{align*}
  On the other hand the right-hand-side of equation \eqref{eq:7},
  \eqref{eq:Duhamel} depends on $h$ only in terms of $h(t,l,lt)$ and by
  construction $lt \in U_l$. Thus also,
  \begin{align*}
    \dt h = \rhs[h]= \rhs[h-(1-P_l)h_0].
  \end{align*}
  Therefore, we may define
  \begin{align*}
    h_*= h- (1-P_l)h_0
  \end{align*}
  and observe that $h_*$ is a solution of \eqref{eq:7} with initial data
  \begin{align*}
    h_{0,*}= h_0- (1-P_l)h_0= P_l h_0.
  \end{align*}
\end{proof}

Lemma \ref{lem:obs1} in a sense allows us to exchange compact support in
frequency for compact support in time. For example in Proposition
\ref{prop:init} we consider initial data which is supported on $\{k_0\} \times
(\eta_0-1/2,\eta_0+1/2)$. Thus, if we define $I= \{t: k_0 t \leq \eta_0-1/2\}$,
then the set
$\{k\} \times U_{k_0}$ and the support of our initial data are disjoint and $\dt
h_*$ vanishes. Furthermore, as we show in Proposition \ref{prop:iterate} this
Lemma allows us to ``forget'' about all but the most recent echo in the chain.

We remark that Lemma \ref{lem:obs1} did not yet use any properties of $f_0$ or
$\psi$. In the following we restrict to the special case $f_0\equiv 0$ and
$\psi$ compactly supported in Fourier space inside a ball $B_{\delta}(0)$.

\begin{lem}[Domains of Dependence]
  \label{lem:obs3}
  Let $I \subset \R$ be a given interval and define
  \begin{align*}
    U_l = l I , U_{l}^{\delta}= U_l + (-\delta,\delta).
  \end{align*}
  Then for any $l \in \N$ and any $t \in I$, the solution $h$ of equation
  \eqref{eq:7} with $f_0\equiv 0$ satisfies
  \begin{align*}
    \text{supp}(\dt h(t,l,\cdot)) \subset U_{l-1}^\delta \cup U_{l+1}^\delta. 
  \end{align*}
  In particular, if for some $l$ it holds that
  \begin{align*}
    U_l \cap (U_{l-1}^\delta \cup U_{l+1}^\delta)= \emptyset,
  \end{align*}
  then
  \begin{align*}
    h(t,l,lt)= h(t_0,l,lt)
  \end{align*}
  for any $t_0 \in \overline{I}$ (e.g. the left endpoint).
\end{lem}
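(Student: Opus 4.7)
The proof is essentially a direct unpacking of the right-hand side of \eqref{eq:7} under the hypothesis $f_0\equiv 0$. First I would substitute $f_0\equiv 0$ into \eqref{eq:7}, which eliminates the $\hat{F}(t,k)\cdot(\eta-kt)\hat{f}_0(\eta-kt)$ term and reduces the equation for the mode $l$ to
\begin{align*}
\dt \tilde{h}(t,l,\eta)=-\sum_{m=l\pm 1} m\hat{W}(m)\tilde{h}(t,m,mt)\,(\eta-mt-t\,\text{sgn}(m-l))\,\hat{\psi}(\eta-mt).
\end{align*}
Only the factor $\hat{\psi}(\eta-mt)$ depends on $\eta$, and by the hypothesis $\text{supp}(\hat{\psi})\subset B_\delta(0)$ each summand is supported in $\{\eta:|\eta-mt|<\delta\}\subset mt+(-\delta,\delta)$. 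For $t\in I$ we have $mt\in U_m$, hence $mt+(-\delta,\delta)\subset U_m^\delta$. Taking the union over $m=l\pm 1$ yields the claimed support inclusion $\text{supp}(\dt h(t,l,\cdot))\subset U_{l-1}^\delta\cup U_{l+1}^\delta$.

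For the second assertion, I would fix $l$ with $U_l\cap(U_{l-1}^\delta\cup U_{l+1}^\delta)=\emptyset$ and consider the identity
\begin{align*}
h(t,l,lt)-h(t_0,l,lt)=\int_{t_0}^{t}\dt h(s,l,lt)\,ds,
\end{align*}
for arbitrary $t,t_0\in\overline{I}$. The key point is that the variable $\eta=lt$ on the right is \emph{frozen} while $s$ varies over $[t_0,t]\subset\overline{I}$. By the support statement already established, $\dt h(s,l,\eta)=0$ for all $\eta\notin U_{l-1}^\delta\cup U_{l+1}^\delta$. Since $t\in\overline{I}$ implies $lt\in\overline{U_l}$ and $\overline{U_l}$ is disjoint from the open set $U_{l-1}^\delta\cup U_{l+1}^\delta$ (by the disjointness hypothesis, after a brief verification that taking closures does not introduce overlap since the $(-\delta,\delta)$ fattening is already open), we obtain $\dt h(s,l,lt)=0$ for every $s\in[t_0,t]$. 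The integral vanishes, giving $h(t,l,lt)=h(t_0,l,lt)$.

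There is no real obstacle here: the lemma is a bookkeeping statement about Fourier supports and the conclusion follows from the compact frequency support of $\hat{\psi}$ combined with the nearest-neighbor structure of the coupling induced by $\cos(x_1)$. The only mild subtlety is the need to be careful that the $\delta$-thickening is applied to $U_m$ rather than to the closure, so that the disjointness condition $U_l\cap(U_{l-1}^\delta\cup U_{l+1}^\delta)=\emptyset$ genuinely prevents $lt$ from lying in the support of $\dt h(s,l,\cdot)$ for $s$ up to and including the endpoints of $I$; this is precisely why the statement allows $t_0$ to be taken in $\overline{I}$ rather than only in the interior.
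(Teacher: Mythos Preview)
Your proof is correct and follows exactly the same approach as the paper, which dispatches the lemma in a single sentence: ``We note that $lt \in U_{l}$ and that $\hat{\psi}(\eta-lt)$ vanishes unless $|\eta-lt|<\delta$.'' Your write-up simply unpacks this hint in full detail. One tiny imprecision: you write that ``only the factor $\hat{\psi}(\eta-mt)$ depends on $\eta$,'' but of course the linear factor $(\eta-mt-t\,\text{sgn}(m-l))$ does too; this is harmless since the support of the product is contained in the support of $\hat{\psi}(\eta-mt)$, which is the point you actually use.
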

\begin{proof}
  We note that $lt \in U_{l}$ and that $\hat{\psi}(\eta-lt)$ vanishes unless $|\eta-lt|<\delta$.
\end{proof}

Building on these two Lemmas we are now ready to construct our chain of
echoes.
\begin{prop}[Initial Setup]
  \label{prop:init}
  Let $\eta_0 \gg k_0\geq 1$ (i.e. greater by a factor at least $100$) be given
  and suppose that $h_0$ is supported in $\{k_0\} \times
  (\eta_0-1/2,\eta_0+1/2)$. Define $T_{k_0}>0$ by $\eta_0-1/2-k_0T_{k_0}=0$,
  i.e. the first time $(\eta_0-1/2, \eta_0+1/2)-k_0 t$ hits zero. Then on $(0,T_{k_0})$ the
  solution of \eqref{eq:LVP} with initial data $h_0$ is stationary.
\end{prop}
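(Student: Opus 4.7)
The plan is to verify that the trivial ansatz $h(t,k,\eta)\equiv h_0(k,\eta)$ is a solution of \eqref{eq:LVP} on the interval $(0,T_{k_0})$ and then conclude by uniqueness. Writing the equation in Fourier as in \eqref{eq:7} with $f_0\equiv 0$, the entire right-hand-side is driven by the quantities $\hat{F}(t,l)=l\hat{W}(l)\,h(t,l,lt)$, so the task reduces to showing that these forces all vanish along the ansatz.

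Substituting $h(t)\equiv h_0$, the support hypothesis on $h_0$ forces $l=k_0$ to be the only possibly non-zero mode, and then $h_0(k_0,k_0t)\neq 0$ would require $k_0t\in(\eta_0-\tfrac12,\eta_0+\tfrac12)$. By the very definition $T_{k_0}=(\eta_0-\tfrac12)/k_0$, for $t\in(0,T_{k_0})$ one has $k_0t<\eta_0-\tfrac12$, so this condition fails. Hence $\hat{F}(t,l)\equiv 0$ for all $l\in\Z$ and all $t\in(0,T_{k_0})$, the right-hand-side of \eqref{eq:7} is identically zero, and the constant map $t\mapsto h_0$ is indeed a solution of \eqref{eq:LVP} on this interval.

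To upgrade this to the assertion that it is \emph{the} solution, I would invoke uniqueness of \eqref{eq:LVP} in a class containing $h_0$. Since the problem is linear and $h\mapsto F[h]$ is a bounded linear operator on the relevant modes, a standard Picard-iteration argument suffices. Equivalently, one may avoid abstract well-posedness altogether and argue by continuity: set $T^\star=\sup\{T\in[0,T_{k_0}]:h\equiv h_0 \text{ on }[0,T]\}$; continuity gives $h(T^\star)=h_0$, and the support computation above keeps the right-hand-side zero just past $T^\star$, contradicting maximality unless $T^\star=T_{k_0}$.

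The main (and really only) obstacle is the support bookkeeping in the middle paragraph: one must check that the only mode initially excited is $k_0$, and that this mode becomes resonant (i.e.\ its diagonal $k_0t$ enters the support of $h_0(k_0,\cdot)$) precisely at time $T_{k_0}$, not before. Before $T_{k_0}$ the force is identically zero, which prevents any other mode from being excited, and this is exactly what lets the bootstrap close. No use of Lemma~\ref{lem:obs1} or Lemma~\ref{lem:obs3} is strictly required, although the lemmas give an alternative conceptual packaging of the same argument.
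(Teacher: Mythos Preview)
Your proof is correct and is essentially the same argument as the paper's. The paper routes the support computation through Lemma~\ref{lem:obs1} (observing that $P_l h_0=0$ for every $l$ on $I=(0,T_{k_0})$, so $h_*\equiv 0$ and $h=h_0$), while you verify directly that the force vanishes and invoke uniqueness; as you already note in your last paragraph, this is just an alternative packaging of the identical observation that $h_0(l,lt)=0$ for all $l$ and all $t\in(0,T_{k_0})$.
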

\begin{proof}[Proof of Proposition \ref{prop:init}]
  Let $I=(0,T_{k_0})$, then by Lemma \ref{lem:obs1} we may equivalently compute the
  solution $h_*$ with initial data $h_{*,0}=P_l h_0$. But by construction of $T_{k_0}$
  it holds that $P_k h_0(k,\cdot)=0$ and thus $h_{*,0}$ and hence $h_*$ are
  trivial and
  \begin{align*}
    h= h_* + (1-P_l)h_0=0 + h_0 
  \end{align*}
  is a stationary solution.
\end{proof}

\begin{prop}[The First Echo]
  \label{prop:firstres}
  Let $\eta_0,k_0$ and $h_0$ be as in Proposition \ref{prop:init}, $f_0\equiv 0$
  and define
  \begin{align*}
    U_{k_0}= (\eta_0-1/2, \eta_0+1/2), \\
    I_{k_0}= \frac{1}{k_0} U_{k_0}.
  \end{align*}
  Note that the left endpoint of $I_{k_0}$ is given by $T_{k_0}$ of Proposition
  \ref{prop:init}. Then it holds that for all $t \in I_{k_0}$
  \begin{align*}
    h(t,l,\eta)&\equiv 0 \text{ for all } l \not \in \{k_0-1,k_0,k_0+1\}, \\
    h(t,k_0,\eta)&= h(T,k_0,\eta), \\
    h(t,k_0\pm 1, \eta) &= \int_{T}^t  k_0 \hat{W}(k_0) h(T,k_0,k_0t) ((\eta-k_0t)\mp t) \hat{\psi}(\eta-k_0t) dt. 
  \end{align*}
\end{prop}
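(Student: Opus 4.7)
The plan is to verify directly that the formulae proposed in the statement define a solution of \eqref{eq:LVP} on $I_{k_0}$ which agrees with $h_0$ at $t=T_{k_0}$ (the latter being supported in $\{k_0\}\times U_{k_0}$ by Proposition \ref{prop:init}); uniqueness for the linear system \eqref{eq:7} then forces equality. The single crucial ingredient is a support separation that uses both the compact Fourier support $\hat\psi\subset B_\delta(0)$ and the hypothesis $\eta_0\gg k_0$.

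Concretely, I would adopt the ansatz of the proposition and first compute Fourier supports. Assuming only the modes $k_0$ and $k_0\pm 1$ are active, the formula for $h(t,k_0\pm 1,\eta)$ forces its $\eta$-support (through the factor $\hat\psi(\eta-k_0 t')$) to lie inside the thickened interval $U_{k_0}^\delta=(\eta_0-1/2-\delta,\eta_0+1/2+\delta)$. The key observation is that for $t\in I_{k_0}$ the line $\eta=(k_0\pm 1)t$ meets the set $\tfrac{k_0\pm 1}{k_0}U_{k_0}$, which is shifted from $U_{k_0}$ by approximately $\pm\eta_0/k_0$; since the hypothesis gives $\eta_0/k_0\geq 100$ while $\delta<0.1$, this shift dwarfs $1/2+\delta$, so $(k_0\pm 1)t\notin U_{k_0}^\delta$. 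Consequently
\begin{equation*}
\hat F(t,k_0\pm 1)=(k_0\pm 1)\hat W(k_0\pm 1)\,h(t,k_0\pm 1,(k_0\pm 1)t)\equiv 0 \quad\text{on } I_{k_0}.
\end{equation*}

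With $\hat F(t,k_0\pm 1)\equiv 0$, the right-hand side of \eqref{eq:7} for $k=k_0$ vanishes identically (the term with $\hat f_0$ is absent because $f_0\equiv 0$), which is consistent with the claim $h(t,k_0,\eta)=h_0(k_0,\eta)$. For any $l$ with $|l-k_0|\geq 2$ the source in \eqref{eq:7} is a sum of $\hat F(t,l\pm 1)$'s, none of which equals $\hat F(t,k_0)$; since by the previous paragraph the only nonzero force on $I_{k_0}$ is $\hat F(t,k_0)$ itself, all such modes stay identically zero. For $l=k_0\pm 1$ only the contribution $l'=k_0$ survives in \eqref{eq:7}, with $\hat F(t,k_0)=k_0\hat W(k_0)\,h_0(k_0,k_0 t)$ and sign $\mathrm{sgn}(k_0-(k_0\pm 1))=\mp 1$; integrating \eqref{eq:Duhamel} from $T_{k_0}$ and reading off the sign reproduces the stated formula.

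The main obstacle is the quantitative support separation carried out in the second paragraph: it is what cleanly isolates a \emph{single} echo during $I_{k_0}$ and prevents $k_0\pm 1$ from feeding back into $k_0$. It rests on the combination $\eta_0\geq 100 k_0$ and $\delta<0.1$, and exactly the same bookkeeping will have to be repeated (in Proposition \ref{prop:iterate}) along the chain $k_0\to k_0-1\to\dots\to 0$ so that successive resonant windows $I_{k_0},I_{k_0-1},\dots$ stay $\delta$-disjoint from the neighboring ones.
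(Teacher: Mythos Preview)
Your proposal is correct and follows essentially the same route as the paper: the heart of both arguments is the support separation $U_{k_0\pm 1}\cap U_{k_0}^\delta=\emptyset$ (forced by $\eta_0/k_0\gg 1+\delta$), which kills $\hat F(t,k_0\pm 1)$ on $I_{k_0}$ and decouples the outer modes. The only stylistic difference is that the paper argues \emph{a priori} via Lemma~\ref{lem:obs3} (valid for any solution, so one reads off $h(t,k_0\pm1,(k_0\pm1)t)=h(T_{k_0},k_0\pm1,(k_0\pm1)t)=0$ directly and then evaluates Duhamel), whereas you package the same computation as a guess-and-verify plus uniqueness; neither approach buys anything the other does not.
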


\begin{proof}[Proof of Proposition \ref{prop:firstres}]
  Since $\eta_0\gg k_0\geq 1$ it follows that for $l \in \{k_0-1,k_0+1\}$ in the
  notation of Lemma \ref{lem:obs3}
  \begin{align*}
    U_{l} \cap (U_{l+1}^{\delta} \cup U_{l-1}^\delta)=\emptyset.
  \end{align*}
  Thus, in the evolution of $h(t,l\cdot)$ for $l< k_0-1$ and $l>k_0-1$ we may
  replace $h(t,k_0\pm 1, (k_0\pm 1)t)= h(T, k_0 \pm 1,(k_0\pm 1)t)=0$, which is
  trivial. The evolution of $l< k_0-1$ and $l>k_0-1$ thus decouples from the
  evolution of $\{k_0-1,k_0,k_0+1\}$. In particular, since it was trivial to
  begin with it remains trivial, which proves the first statement.

  Furthermore, since $h(t,k_0\pm 1, (k_0\pm 1)t)=0$, the second statement follows
  immediately form the Duhamel integral formulation \eqref{eq:Duhamel}.
  
  Finally, we may either use Lemma \ref{lem:obs3} to obtain that
  \begin{align*}
    P_{k_0} h(t,k_0,\eta)= P_{l}h(T,k_0,\eta) 
  \end{align*}
  and thus in particular
  \begin{align*}
    h(t,k_0,k_0t)= h(T,k_0,k_0t),
  \end{align*}
  or deduce this from the second statement.
  The third statement thus also corresponds to an evaluation of the Duhamel
  integral formula \eqref{eq:Duhamel}.
\end{proof}

\begin{prop}[Pause between Echoes]
  \label{prop:stationaryuntil}
  Let $\eta_0, k_0, h_0, f_0$ be as in Proposition \ref{prop:init}. Let further
  $0<T_{k_0}'<T_{k_0-1}$ such that:
  \begin{align*}
    \eta_0+1/2-k_0T_{k_0}'=0, \\
    \eta_0-1/2-\delta-(k_0-1)T_{k_0-1}=0.
  \end{align*}
  Then for all $t \in [T_{k_0}',T_{k_0-1}]$ it holds that
  \begin{align*}
    h(t)=h(T_{k_0}')
  \end{align*}
  is stationary.
\end{prop}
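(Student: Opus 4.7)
The plan is to apply Lemma \ref{lem:obs1} on the interval $I = [T_{k_0}', T_{k_0-1}]$, treating $h(T_{k_0}')$ as the new initial datum. By Proposition \ref{prop:firstres}, only the three modes $l \in \{k_0-1, k_0, k_0+1\}$ are non-trivial at time $T_{k_0}'$, so it suffices to check that for each such $l$ the active part $P_l \, h(T_{k_0}', l, \cdot)$ vanishes, where $U_l = l \cdot I$ and $P_l$ denotes its indicator. Lemma \ref{lem:obs1} then yields $h(t,l,\eta) = (1-P_l) h(T_{k_0}', l, \eta) = h(T_{k_0}', l, \eta)$ for all $t \in I$, which is exactly stationarity.

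First I would read off the Fourier supports at time $T_{k_0}'$ from the explicit Duhamel formulas in Proposition \ref{prop:firstres}. Since the initial datum $h_0$ is supported in $\{k_0\} \times (\eta_0-1/2, \eta_0+1/2)$ and the mode $k_0$ is unchanged throughout $I_{k_0}$, one has $\supp h(T_{k_0}', k_0, \cdot) \subset (\eta_0 - 1/2, \eta_0 + 1/2)$. The sidebands $k_0 \pm 1$ are built by integrating against $\hat{\psi}(\eta - k_0 s)$ with $s \in (T_{k_0}, T_{k_0}')$, so $\supp(\hat\psi) \subset (-\delta, \delta)$ forces them into $(\eta_0 - 1/2 - \delta, \eta_0 + 1/2 + \delta)$.

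The remaining task is to verify disjointness of each of these supports from the corresponding $U_l = [l T_{k_0}', l T_{k_0-1}]$. For $l = k_0$, the set $U_{k_0}$ begins at $k_0 T_{k_0}' = \eta_0 + 1/2$, i.e.\ at the right edge of the $k_0$-support. For $l = k_0 - 1$, the defining equation $\eta_0 - 1/2 - \delta - (k_0-1) T_{k_0-1} = 0$ places the right endpoint of $U_{k_0-1}$ exactly at $\eta_0 - 1/2 - \delta$, the left edge of the sideband support. Both cases give disjointness up to a measure-zero boundary. The delicate case is $l = k_0 + 1$: here I would compute $(k_0+1) T_{k_0}' = \eta_0 + 1/2 + (\eta_0+1/2)/k_0$ and invoke the standing assumption $\eta_0 \gg k_0$ together with $\delta < 0.1$ to conclude $(\eta_0+1/2)/k_0 > \delta$, so that $U_{k_0+1}$ lies strictly above the sideband support. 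This sideband case is the one spot where the quantitative separation $\eta_0 \gg k_0$ is genuinely used, and is what I expect to demand the most care in a written-out proof.
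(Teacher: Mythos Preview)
Your proposal is correct and follows essentially the same route as the paper: identify the supports of the three nontrivial modes at time $T_{k_0}'$, apply Lemma~\ref{lem:obs1} on $I=(T_{k_0}',T_{k_0-1})$, and check that each $U_l$ misses the corresponding support via the three inequalities $k_0 T_{k_0}' \geq \eta_0+1/2$, $(k_0-1)T_{k_0-1}\leq \eta_0-1/2-\delta$, and $(k_0+1)T_{k_0}' \geq \eta_0+1/2+\delta$. Your explicit computation $(k_0+1)T_{k_0}' = \eta_0+1/2 + (\eta_0+1/2)/k_0$ and the use of $\eta_0 \gg k_0$ to beat $\delta$ is exactly the content behind the paper's third inequality, which it states without unpacking.
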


\begin{proof}[Proof of Proposition \ref{prop:stationaryuntil}]
  By the result of Proposition \ref{prop:init} at time $T=T_{k_0}'$ it holds that
  \begin{align}
    \label{eq:8}
    \begin{split}
      \supp(h(T,k_0,\cdot)) &\subset (\eta_0-1/2,\eta_0+1/2), \\
      \supp(h(T,k_0+1,\cdot))&\subset (\eta_0-1/2-\delta,\eta_0+1/2+\delta),\\
      \supp(h(T,k_0-1,\cdot))&\subset (\eta_0-1/2-\delta,\eta_0+1/2+\delta),
    \end{split}
  \end{align}
  and all other modes $h(T,l,\cdot)$ are trivial. We now apply Lemma
  \ref{lem:obs1} with $I=(T_{k_0}',T_{k_0-1})$ and observe that
  \begin{align}
    \label{eq:9}
    \begin{split}
      \eta_0 +1/2 &\leq k_0 T_{k_0}' , \\
      \eta_0 +1/2 + \delta &\leq (k_0+1) T_{k_0}', \\
      (k_0-1)T_{k_0-1} &\leq \eta_0-1/2-\delta.
    \end{split}
  \end{align}
  Hence all restrictions to $U_{k_0-1}, U_{k_0}$ and $U_{k_0+1}$ are trivial and
  we obtain a stationary solution.
\end{proof}

We remark that \eqref{eq:8} remains valid until the time $T_{k_0-1}$ and that the
estimates for $k_0$ and $k_0+1$ remain valid if we replace $T_{k_0}'$ with any larger
time. Hence, we may use Lemma \ref{lem:obs1} in combination with Proposition
\ref{prop:init} to study the next echo.

\begin{prop}[Iterating along a Chain]
  \label{prop:iterate}
  For $k=1,\dots k_0$ define
  \begin{align}
    \begin{split}
      T_k&: \eta_0-1/2-\delta (k_0-k) - k T_k=0, \\
      T_k'&: \eta_0+1/2+\delta (k_0-k) - k T_k'=0.
    \end{split}
  \end{align}
  Then on each interval $(T_k', T_{k-1})$, $k>1$ the solution $h(t)$ is
  stationary. For the evolution on $(T_k, T_k')$, $k\geq 1$, we note that for
  all $t \in (T_k,T_k')$:
  \begin{align}
    \label{eq:11}
    h(t, l, \cdot) &= h (T_{k},l,\cdot) \text{ if } l\neq k-1,k+1, \\
    h(t,k\pm 1, \eta)- h(T_k,k\pm 1, \eta)&= \int_{T_k}^t k \hat{W}(k) h(T,k,k\tau) ((\eta-k\tau)\mp \tau) \psi(\eta-k\tau) d\tau.
  \end{align}
  Furthermore, at time $T_{k'}$ for all $l=k_0+1,k_0,\dots, k-1$
  \begin{align}
    \label{eq:10}
    \supp(h(T_{k}',l,\cdot))&\subset (\eta_0-1/2-\delta |k_0-l|, \eta_0+1/2+\delta |k_0-l|).
  \end{align}
  and for all other $l$, $h(T_{k'},l,\cdot)$ is trivial.
\end{prop}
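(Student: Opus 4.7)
My plan is to prove Proposition \ref{prop:iterate} by downward induction on $k$, starting at $k=k_0$ and descending to $k=1$. The base case $k=k_0$ is already established: Proposition \ref{prop:init} gives stationarity on $(0,T_{k_0})$, Proposition \ref{prop:firstres} gives the Duhamel formulas for the modes $k_0\pm 1$ on $(T_{k_0},T_{k_0}')$, and Proposition \ref{prop:stationaryuntil} handles the gap $(T_{k_0}',T_{k_0-1})$. Note that after the first echo, the Duhamel formula immediately yields the support bound $\supp(h(T_{k_0}',k_0\pm 1,\cdot))\subset(\eta_0-1/2-\delta,\eta_0+1/2+\delta)$, which is exactly \eqref{eq:10} with $l=k_0\pm 1$.

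For the inductive step, assume \eqref{eq:10} holds at time $T_{k}'$ and that for $l\notin\{k_0+1,\dots,k-1\}$ the mode $h(T_k',l,\cdot)$ is trivial. The first sub-claim is stationarity on $(T_k',T_{k-1})$. I apply Lemma \ref{lem:obs1} on $I=(T_k',T_{k-1})$: for each $l$ we compare the characteristic set $U_l=lI$ with the support \eqref{eq:10}. The defining relations of $T_k'$ and $T_{k-1}$ translate directly into
\begin{align*}
  l\,T_k' &\geq \eta_0+1/2+\delta(k_0-l) \quad\text{for } l\geq k,\\
  l\,T_{k-1} &\leq \eta_0-1/2-\delta(k_0-l) \quad\text{for } l\leq k-1,
\end{align*}
so in either case the initial-data restriction $P_l h(T_k',l,\cdot)$ vanishes and $h$ is stationary on this interval, as in Proposition \ref{prop:stationaryuntil}.

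Next I treat the active interval $(T_k,T_k')$. By the previous step, $h(T_{k-1},l,\cdot)=h(T_k',l,\cdot)$, and the frequency supports have not changed up to time $T_k$. I invoke Lemma \ref{lem:obs3} on $I=(T_k,T_k')$: I must check that the only pair of indices $l,l\pm 1$ for which $U_l\cap U_{l\pm 1}^\delta\neq\emptyset$ (with both sides carrying nontrivial data) is the pair centered at $k$. For $l\geq k+1$ the characteristic set $U_l$ lies above $(k+1)T_k>\eta_0+1/2+\delta(k_0-k-1)$, hence outside the supports \eqref{eq:10} of the adjacent modes; for $l\leq k-2$ symmetric reasoning applies. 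Only mode $k$, whose support $(\eta_0-1/2-\delta(k_0-k),\eta_0+1/2+\delta(k_0-k))$ precisely equals $U_k$ on $I$ by definition of $T_k,T_k'$, drives the Duhamel term. This yields \eqref{eq:11}, exactly as in Proposition \ref{prop:firstres}, with $k_0$ replaced by $k$.

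Finally, the support bound \eqref{eq:10} at time $T_k'$ for modes $k\pm 1$ is read off from the Duhamel integral: since $\hat\psi(\eta-k\tau)$ vanishes unless $|\eta-k\tau|<\delta$ and $k\tau$ ranges over the support of $h(T_k,k,\cdot)$, the result is supported in $(\eta_0-1/2-\delta(k_0-k)-\delta,\eta_0+1/2+\delta(k_0-k)+\delta)$, which is \eqref{eq:10} with $l=k\pm 1$. For the other modes $l\in\{k_0+1,\dots,k\}$ the support is unchanged from $T_k$ by \eqref{eq:11}, so \eqref{eq:10} persists. The main technical obstacle is precisely the disjointness verification in the inductive step: one must ensure that the slow outward drift of supports by $\delta$ at each echo never causes two distant modes to resonate simultaneously, and this is exactly guaranteed by the arithmetic choice of $T_k,T_k'$ together with the hypothesis $\eta_0\gg k_0$ which ensures $\delta k_0\ll\eta_0/k$ across the whole chain.
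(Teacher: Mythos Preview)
Your approach is the same as the paper's: downward induction on $k$ from $k_0$ to $1$, with the base case supplied by Propositions \ref{prop:init}, \ref{prop:firstres}, \ref{prop:stationaryuntil}, and the inductive step combining Lemma \ref{lem:obs1} (to discard modes whose characteristic sets miss their supports) with the Duhamel formula via Lemma \ref{lem:obs3}. There is, however, an indexing slip in your inductive step: having assumed \eqref{eq:10} at time $T_k'$ and established stationarity on $(T_k',T_{k-1})$, the next active interval to analyze is $(T_{k-1},T_{k-1}')$ with driving mode $k-1$, not $(T_k,T_k')$ --- the latter lies \emph{before} $T_k'$ and would already have been handled at the previous step of the induction (so your sentence ``the frequency supports have not changed up to time $T_k$'' does not parse, since $T_k<T_k'<T_{k-1}$). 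The paper proceeds exactly this way, applying Lemma \ref{lem:obs1} on $I=(T_{k-1},T_{k-1}')$ to reduce to data localized at the single mode $k-1$ and then repeating the argument of Proposition \ref{prop:firstres}; all of your disjointness checks and support-propagation arguments carry over verbatim once you replace $k$ by $k-1$ in the second half of your step.
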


The time intervals $(T_k,T_{k}')$ are when we see \emph{echoes} and
\eqref{eq:11} expresses the corresponding growth in terms of Duhamel integral.
In Theorem \ref{thm:inflation} we show that a full chain may result in Gevrey norm
inflation.
However, we also note that after the time $T_1'$ this single echo chain solution becomes stationary (see
Proposition \ref{prop:asymptoticallystationary}) and is thus
\emph{asymptotically stable}. In order to construct asymptotically unstable
initial data and blow-up we thus need to combine countably infinitely many echo
chains with times $T_{1,j}'\rightarrow \infty$ (see Theorem \ref{thm:blowup}).
In particular, we cannot allow any constraint on the size of $\eta_0$.

\begin{proof}[Proof of Proposition \ref{prop:iterate}]
  We remark that \eqref{eq:10} and the triviality of all other modes
implies that $h$ is stationary on $(T_{k}', T_{k-1})$ by the same argument as in Proposition \ref{prop:stationaryuntil}.

  It thus remains to study the evolution on $I_k=(T_{k},T_{k}')$. We proceed by
  induction in $k$ and have already established the case $k=k_0$ in Proposition
  \ref{prop:firstres}. Thus suppose that \eqref{eq:10} and \eqref{eq:11} hold
  for a given $k>1$ (if $k=1$ we are already done). Then by the above argument
  the evolution on $(T_{k}', T_{k-1})$ is trivial and hence \eqref{eq:10}
  remains valid with $T_{k'}$ replaced by $T_{k-1}$. Using Lemma \ref{lem:obs1}
  with $I=(T_{k-1}, T_{k-1}')$ we may further reduce the problem to the one with
  $h(T_{k-1})$ replaced by its restriction. Since $\eta_0\gg k_0\geq k$ it
  follows that for all $l> k-1$
  \begin{align*}
    \eta_0+1/2+\delta |k_0-l| \leq \frac{l}{k-1} \left(\eta_0-1/2-\delta (k_0-k)  \right)\\
    =  \frac{l}{k-1} (k-1) T_{k-1}= l T_{k-1}. 
  \end{align*}
  Therefore, for these $l$ the restrictions are trivial and we may solve for $h_*$
  with initial data $h_*(T_{k-1}, l,\cdot)= \delta_{l (k-1)} h_(T_k, l,\cdot)$.
  The initial data of $h_*$ is localized in a \emph{single} mode $k-1$.
  We may thus repeat the argument of Proposition \ref{prop:firstres} to show that  $h_*(t,l,\cdot)$ is trivial unless $l= (k-1)\pm 1$ and for those $l$ it is
  given by the Duhamel integral, which is the statement of \eqref{eq:11} for
  $k-1$. Finally, we note that the support for $l \neq (k-1)\pm 1$ is preserved
  and that due to the convolution structure in terms of $\tilde{\psi}(\eta-(k-1)t)$
  the support of the Duhamel integral is contained inside a $\delta$
  neighborhood of the support of $h(T_{k-1},k-1,\cdot)= h(T_k', k-1,\cdot)$,
  which was controlled by the induction assumption.
\end{proof}

\begin{prop}[Asymptotic stability]
  \label{prop:asymptoticallystationary}
  Let $h_0$ and $T_{j}'$ be as in Proposition \ref{prop:iterate}. Then the
  solution $h(t)$ is stationary after the time $T_1'$. In particular, denoting
  $h_{\infty}=h(T_1')$ it trivially holds that
  \begin{align*}
    h(t) \rightarrow  h_{\infty}
  \end{align*}
  as $t \rightarrow \infty$ in any Sobolev or Gevrey space which contains $h(T_1')$.
\end{prop}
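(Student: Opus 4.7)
The plan is to reduce the dynamics on $(T_1',\infty)$ to triviality by feeding the compact frequency support of $h(T_1')$ from Proposition \ref{prop:iterate} into Lemma \ref{lem:obs1}. Concretely, I would fix an arbitrary $T^{*}>T_1'$, set $I=(T_1',T^{*})$, and show that the \emph{restricted} data $P_l h(T_1',l,\cdot)$ vanishes for every $l\in\Z$. Once this is done, Lemma \ref{lem:obs1} immediately yields
\[
  h(t,l,\cdot)=(1-P_l)h(T_1',l,\cdot)=h(T_1',l,\cdot)\qquad\text{for all }t\in I,
\]
and since $T^{*}$ is arbitrary, $h$ is stationary on $(T_1',\infty)$.

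The one substantive step is the support check $\supp h(T_1',l,\cdot)\cap U_l=\emptyset$ for every $l$, with $U_l=lI$. By Proposition \ref{prop:iterate}, only the modes $l\in\{0,1,\dots,k_0+1\}$ are potentially nonzero, each supported in $(\eta_0-1/2-\delta|k_0-l|,\eta_0+1/2+\delta|k_0-l|)$. For $l\geq 1$, the upper endpoint of this interval is maximized at $l=1$ and equals exactly $T_1'=\eta_0+1/2+\delta(k_0-1)$ by the defining equation, while $U_l\subset (lT_1',\infty)\subset(T_1',\infty)$ lies strictly above it. For $l=0$, $U_0=\{0\}$, and the support of $h(T_1',0,\cdot)$ is contained in $(\eta_0-1/2-\delta k_0,\eta_0+1/2+\delta k_0)$, which does not contain $0$ since $\eta_0\gg k_0$ and $\delta<0.1$. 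All remaining modes are already trivial, so the required disjointness holds.

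With $h(t)\equiv h(T_1')$ on $(T_1',\infty)$, convergence to $h_\infty:=h(T_1')$ in any norm for which $h(T_1')$ is finite is tautological, since the sequence is eventually constant. The potential obstacle is the bookkeeping at the borderline case $l=1$, where the endpoint of the support coincides with $T_1'$; however, this is by design rather than by accident, since $T_1'$ was defined precisely as the end of the final echo window, so the inequality becomes the desired strict inequality as soon as $t>T_1'$. Hence I expect no real difficulty once Proposition \ref{prop:iterate} and the two observation lemmas are invoked.
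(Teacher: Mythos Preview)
Your argument is correct and follows the same route as the paper: invoke Lemma~\ref{lem:obs1} on the interval beyond $T_1'$ and verify, using the support statement~\eqref{eq:10} of Proposition~\ref{prop:iterate}, that the restricted initial data $P_l h(T_1',l,\cdot)$ vanishes, so the auxiliary solution $h_*$ is trivial and $h$ is stationary. The only cosmetic difference is your treatment of the mode $l=0$: you observe that $U_0=\{0\}$ misses the support of $h(T_1',0,\cdot)$, whereas the paper instead notes that $0\cdot\hat W(0)=0$, so the zero mode never feeds back into the equation regardless of its support. Both are valid; your version has the small advantage of treating all modes uniformly through Lemma~\ref{lem:obs1}, while the paper's version would survive even if the support of $h(T_1',0,\cdot)$ happened to include the origin.
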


\begin{proof}[Proof of Proposition \ref{prop:asymptoticallystationary}]
  We apply Lemma \ref{lem:obs1} with $I=(T_1',\infty)$. Then the projection of
  $h(T_{1}',l,\eta)$ is trivial for $l\neq 0$. Since $0 \hat{W}(0)=0$ the mode
  $h(t,0,\cdot)$ does not influence any mode (not even itself).
  Hence, $h_*$ is a stationary solution and thus so is $h$.
\end{proof}

\section{Sequences of Chains, Norm Inflation and Blow-up}
\label{sec:blow-up}

In the preceding Proposition \ref{prop:iterate} we have seen that a solution
with initial Fourier support near $(k_0,\eta_0)$ with $\eta_0\gg k_0\geq 1$ stays
supported in a stripe
\begin{align*}
  \{1,\dots, k_0+1\} \times (\eta_0-1/2-k_0\delta, \eta_0+1/2 +k_0\delta).
\end{align*}
For such a function all Sobolev norms or Gevrey norms are equivalent to the
$L^2$ norm with a corresponding factor $\eta_0^s$ or $\exp(C \eta_0^{1/s})$,
respectively. Thus the following $L^2$ norm inflation result immediately extends
to other ($L^2$-based) Sobolev or Gevrey norms.

\begin{thm}
  \label{thm:inflation}
  Let $\psi(v) \in \mathcal{S}(\R)$ with compact support in Fourier space,
  $\hat{\psi}\geq 0$ and $|\hat{W}(k)|=|k|^{1-s}, s>0$.

  Then there exist pairs $(\eta_0,k_0)$ tending to infinity and initial data $h_0 \in L^2$
  supported in $\{k_0\} \times (\eta_0-1/2,\eta_0+1/2)$ such that the solution
  $h(t)$ with this initial data is stationary for $t>\eta_0+1/2+ \delta
  k_0=:T_1'$ and there exist constants $c_1,c_2$ (proportional to
  $\|\psi(v)\|_{L^\infty}$ and independent of $\eta_0$) such that
  \begin{align*}
 \exp(\sqrt[s]{c_1 \eta_0})\leq   \|h(T_1')\|_{L^2} \leq \exp(\sqrt[s]{c_2 \eta_0})
  \end{align*}
\end{thm}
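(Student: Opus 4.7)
The plan is to exploit the iterative structure of Proposition \ref{prop:iterate} to express $h(T_1',0,\cdot)$ as a product of scalar multiplicative factors times an iterated convolution of $\hat{\psi}$ against the initial data, and then to optimise the chain length $k_0$ via Stirling's formula. For the initial data I would take $h_0$ supported on the single mode $\{k_0\}$ to be a nonnegative Schwartz bump in $\eta$ supported in $(\eta_0-1/2,\eta_0+1/2)$, normalised so that $\|h_0\|_{L^2}\sim\|h_0\|_{L^1}\sim 1$. The integer $k_0$ is selected at the end so that the sequence $(\eta_0,k_0)$ tends to infinity while saturating the growth estimate.

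For every $1\le k\le k_0$, the ``downward'' branch of the Duhamel formula in Proposition \ref{prop:iterate} reads, after the substitution $s=k\tau$,
\begin{align*}
  h(T_k',k-1,\eta) \;=\; \hat{W}(k) \int h(T_k,k,s)\Bigl(\tfrac{s}{k}+(\eta-s)\Bigr)\hat{\psi}(\eta-s)\,ds.
\end{align*}
By Proposition \ref{prop:iterate} the function $h(T_k,k,\cdot)$ is supported in an interval of length $O(1+k_0\delta)$ centred at $\eta_0$, and $|\eta-s|\le\delta$ on the support of $\hat{\psi}(\eta-\cdot)$. Hence the weight factors as $\tfrac{\eta_0}{k}\bigl(1+O(k_0/\eta_0)\bigr)$ uniformly. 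Using Lemma \ref{lem:obs1} at each step to isolate the descending branch (the ``upward'' $k+1$-mode is produced but never re-enters the iteration), and cascading from $k=k_0$ down to $k=1$, one obtains
\begin{align*}
  h(T_1',0,\eta)\;=\;\Bigl(\prod_{k=1}^{k_0}\hat{W}(k)\,\tfrac{\eta_0}{k}\Bigr)\bigl(1+\mathcal{E}\bigr)\,\bigl(\hat{\psi}^{\ast k_0}\ast h_0\bigr)(\eta),
\end{align*}
with $|\mathcal{E}|\le(1+Ck_0/\eta_0)^{k_0}-1$.

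The $L^2$ bounds now follow by direct convolution estimates. Young's inequality yields the upper bound $\|\hat{\psi}^{\ast k_0}\ast h_0\|_{L^2}\le\|\hat{\psi}\|_{L^1}^{k_0}\|h_0\|_{L^2}$. For the lower bound, the positivity hypothesis $\hat{\psi}\ge 0$ together with $h_0\ge 0$ keeps the iterated convolution nonnegative, with total mass $\|\hat{\psi}\|_{L^1}^{k_0}\|h_0\|_{L^1}$ concentrated in a set of diameter $O(1+k_0\delta)$, so Cauchy--Schwarz gives
\begin{align*}
  \|\hat{\psi}^{\ast k_0}\ast h_0\|_{L^2}\;\ge\; c\,(1+k_0\delta)^{-1/2}\|\hat{\psi}\|_{L^1}^{k_0}.
\end{align*}
Inserting $|\hat{W}(k)|=k^{1-s}$ reduces the scalar prefactor to $(k_0!)^{-s}\eta_0^{k_0}\|\hat{\psi}\|_{L^1}^{k_0}$. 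Stirling rewrites its logarithm as $k_0\log\bigl(e^s\eta_0\|\hat{\psi}\|_{L^1}/k_0^s\bigr)+O(\log k_0)$, and the optimal choice $k_0=\lfloor(\eta_0\|\hat{\psi}\|_{L^1})^{1/s}\rfloor$ pushes this to $s\,(\eta_0\|\hat{\psi}\|_{L^1})^{1/s}+O(\log\eta_0)$, producing the advertised $\exp(c\,\eta_0^{1/s})$ with $c$ proportional to $\|\hat{\psi}\|_{L^1}^{1/s}\lesssim\|\psi\|_{L^\infty}^{1/s}$. The polynomial prefactor $(1+k_0\delta)^{-1/2}$ is absorbed by slightly shrinking $c_1$.

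The main obstacle is controlling the cumulated error $(1+Ck_0/\eta_0)^{k_0}=\exp(O(k_0^2/\eta_0))$ so that it does not overwhelm the target exponential $\exp(c\,\eta_0^{1/s})$. At the optimal $k_0\sim\eta_0^{1/s}$ the former is $\exp(O(\eta_0^{2/s-1}))$, strictly subdominant precisely when $s>1$; for smaller $s$ one must expand the weight $s/k+(\eta-s)$ beyond its leading term rather than replacing it outright, at the cost of tracking further polynomial corrections. In every regime, the positivity hypothesis $\hat{\psi}\ge 0$ is indispensable: it rules out destructive interference between successive echo contributions, so the scalar product of $k_0$ factors is genuinely attained rather than lost to cancellations.
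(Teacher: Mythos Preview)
Your argument follows the paper's almost exactly: iterate Proposition~\ref{prop:iterate}, change variables $s_j=j\tau_j$ to reveal the iterated convolution $\hat\psi^{*k_0}*h_0$, extract the scalar $\prod_j\hat W(j)\,\eta_0/j$, and optimise $k_0$ via Stirling. Two remarks. First, where you bound $\|\hat\psi^{*k_0}*h_0\|_{L^2}$ by Young above and by Cauchy--Schwarz on the $L^1$ mass below, the paper instead applies Plancherel to compute that norm exactly as $\|\psi(v)^{k_0}\check h_0(v)\|_{L^2}$, giving both bounds simultaneously with constant $\|\psi\|_{L^\infty}$ and no $(1+k_0\delta)^{-1/2}$ loss to absorb. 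Second, your concern that the cumulated weight error $\exp(O(k_0^2/\eta_0))=\exp(O(\eta_0^{2/s-1}))$ is subdominant only when $s>1$ is well taken and applies equally to the paper's own estimate via $T_j\pm\delta\approx\eta_0/j$; the paper does not flag this, implicitly working in the regime $s>1$ (the physical case of interest being $s=3$).
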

In particular, we see that the linearized Vlasov-Poisson equations around
self-similar solutions \eqref{eq:LVP} exhibit \emph{full echo chains} of
\emph{arbitrarily long length}.
They thus exhibit norm inflation in any Sobolev or
Gevrey norm by an arbitrarily large factor (which is not limited by a smallness
constraint!).
In Theorem \ref{thm:blowup} we further show that there exists Gevrey regular
initial data which not only exhibits norm inflation but \emph{blow-up}.
As a complementary result in Theorem \ref{thm:stability} we show that Gevrey 3
regularity is critical in the sense that this blow-up corresponds to a loss of
constant in the exponent and can be absorbed in the case of highly regular data
(that is, a large constant in the exponent).
The problem \eqref{eq:LVP} is \emph{stable} in (high) Gevrey 3 regularity.

\begin{proof}[Proof of Theorem \ref{thm:inflation}]
  The solution constructed in the preceding section is stationary after time
  $T_1'$ and satisfies the support assumptions. It thus only remains to choose
  $\eta_0,k_0$ and $h_0$ in a suitable way to obtain upper and lower bounds.

  Using the identities obtained in Proposition \ref{prop:iterate} $k_0$ times we
  note that
  \begin{align*}
    h(T_1', 0, \eta)&= \int_{T_1}^{T_1'} 1 \hat{W(1)} (\eta-\tau_1 + \tau_1) \psi(\eta-\tau_1) h(T_1,1,\tau_1) d\tau_1 \\
                    &= \int_{T_1}^{T_1'} 1 \hat{W}(1) (\eta-\tau_1 + \tau_1) \psi(\eta-\tau_1)\\
                    & \quad \int_{T_2}^{T_2'} 2 \hat{W}(2) (\tau_1-2\tau_2 + \tau_2) \psi(\tau_1-2 \tau_2) h(T_2, 2, 2 \tau_2) d\tau_1 d \tau_2 \\
                    &= \left( \prod_{j=1}^{k_0} j \hat{W}(j)\right) \\
                    & \quad \int_{T_{j}\leq \tau_j \leq T_{j}'}  (\eta-\tau_1 + \tau_1)(\tau_1-2\tau_2 + \tau_2) \dots \\
                    & \quad \psi(\eta-\tau_1) \psi(\tau_1-2 \tau_2) \dots  \\
                    & \quad h_0(0, k_0, k_0 \tau_{k_0}) d\tau_1 \dots d\tau_{k}.
  \end{align*}

  To simplify estimates we assume that $h_0(0,k_0,\cdot)\geq 0$ and $\psi\geq
  0$. By the support assumption on $\psi$ it further holds that $|\eta-\tau_1|<
  \delta, |\tau_1-2\tau_2|< \delta, \dots$ and thus
  \begin{align*}
    0\leq T_{k+1}-\delta \leq k \tau_k - (k+1)\tau_{k+1} + \tau_{k+1} \leq T_{k}' +\delta.
  \end{align*}
  Therefore we may bound $h(T_1', 0, \eta)$ above and below in terms of
  \begin{align}
    \label{eq:13}
    \left( \prod_{j=1}^{k_0} j \hat{W}(j) (T_j-\delta)  \right)
  \end{align}
  and
  \begin{align}
    \label{eq:14}
    \left( \prod_{j=1}^{k_0} j \hat{W}(j) (T_j'+\delta)  \right)
  \end{align}
  times
  \begin{align}
    \int_{T_{j}\leq \tau_j \leq T_{j}'} \psi(\eta-\tau_1) \psi(\tau_1-2 \tau_2) \dots \psi((k_0-1)\tau_{k_0-1}- k_0\tau_{k_0}) \ h_0( k_0, k_0 \tau_{k_0}) d\tau_1 \dots d\tau_{k}.
  \end{align}
  We next introduce a change of variables $s_j=j \tau_j$, which yields a Jacobian
  determinant of $\frac{1}{k_0!}$ to obtain an iterated convolution
  \begin{align}
    \label{eq:12}
    \frac{1}{k_0!} \int_{jT_{j}\leq s_j \leq j T_{j}'} \psi(\eta-s_1) \psi(s_1-s_s) \dots  h_0( k_0, s_{k_0}) ds.
  \end{align}
  By the support assumption on $h_0$ we may further replace the domain of
  integration by all of $\R^{k_0}$. Therefore, the $L^2$ norm of \eqref{eq:12}
  can be computed by Plancherel's theorem as
  \begin{align*}
    \frac{1}{k_0!} \|\check{\psi}(v)^{k_0} h_0(k_0,v)\|_{L^2}\approx \frac{\|\check{\psi}\|_{L^\infty}^{k_0}}{k_0!} \|h_0\|_{L^2} .
  \end{align*}
  We further note that by the construction of $T_j$
  \begin{align*}
    T_j-\delta \approx \frac{\eta_0}{j} \approx T_j'+ \delta. 
  \end{align*}
  Using this approximation in \eqref{eq:13} and \eqref{eq:14} with constants
  $C_1, C_2$ we may thus estimate
  \begin{align*}
   C_2^{k_0}\|\check{\psi}\|_{L^\infty}^{k_0} \prod_{j=1}^{k_0} \hat{W}(j) \frac{\eta_0}{j} \leq  \|h(T_1', 0 ,\eta)\|_{L^2} \leq C_2^{k_0}\|\check{\psi}\|_{L^\infty}^{k_0} \prod_{j=1}^{k_0} \hat{W}(j) \frac{\eta_0}{j},
  \end{align*}
  where we cancelled the product over $j$ and the $\frac{1}{k_0!}$.
  As in \cite{Villani_long}, \cite{bedrossian2013landau} we note that if
  $\hat{W}(j)=j^{1-s}$, then
  \begin{align*}
    C_2^{k_0}\|\check{\psi}\|_{L^\infty}^{k_0} \prod_{j=1}^{k_0} \hat{W}(j) \frac{\eta_0}{j} = (C_2 \|\check{\psi}\|_{L^\infty} \eta_0)^{k_0} (k_0!)^{-s}
  \end{align*}
  can be approximated by Stirling's formula and attains its maximum
  \begin{align*}
    \exp(\sqrt[s]{C_2 \|\check{\psi}\|_{L^\infty} \eta_0})
  \end{align*}
  for $k_0\approx \sqrt[s]{C_2 \|\check{\psi}\|_{L^\infty} \eta_0}$.
\end{proof}

\begin{thm}[Blow-up]
  \label{thm:blowup}
  Let $|\hat{W}(k)|=|k|^{-2}$.
  For every $s \in \R$ there exists $h_\infty \in H^{s}\setminus H^{s+}$ and
  $h_0 \in \mathcal{G}_{3}$ such that the solution $h(t)$ of \eqref{eq:LVP} with
  initial datum $h_0$ converges to $h_\infty$ in $H^s$ as $t \rightarrow
  \infty$. In particular $h(t)$ \emph{diverges} in $H^{s+}$. However, if $s\geq
  0$ then $F[h](t)\rightarrow_{L^2} 0$ as $t\rightarrow \infty$ and thus
  \emph{damping persists}.
\end{thm}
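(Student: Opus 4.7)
The plan is to superpose countably many of the echo-chain solutions from Theorem \ref{thm:inflation} at a rapidly increasing sequence of base frequencies and to tune the amplitudes so that the limit lands exactly on the $H^s$ threshold. Concretely, I would fix a fast growing sequence $\eta_{0,j}\to\infty$ (e.g.\ $\eta_{0,j}=2^{2^{j}}$) with enough spacing that the terminal supports from Proposition \ref{prop:iterate} are pairwise disjoint in $\eta$; set $k_{0,j}=\lfloor\sqrt[3]{c_{*}\eta_{0,j}}\rfloor$ to maximise the inflation as in the proof of Theorem \ref{thm:inflation}; and take $h_{0,j}=a_{j}\tilde h_{0,j}$, where $\tilde h_{0,j}$ is the unit-$L^{2}$ echo-chain datum supported in $\{k_{0,j}\}\times(\eta_{0,j}-1/2,\eta_{0,j}+1/2)$. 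Linearity of \eqref{eq:LVP} combined with Proposition \ref{prop:asymptoticallystationary} then yields $h(t)=\sum_{j}h_{j}(t)$, with each $h_{j}$ stationary after time $T'_{1,j}$.

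Next I calibrate the amplitudes by $a_{j}=\eta_{0,j}^{-s}(\log\eta_{0,j})^{-2}\exp(-\sqrt[3]{c_{1}\eta_{0,j}})$, so that Theorem \ref{thm:inflation} gives $\|h_{\infty,j}\|_{L^{2}}\asymp\eta_{0,j}^{-s}(\log\eta_{0,j})^{-2}$ with $h_{\infty,j}:=h_{j}(T'_{1,j})$. Since the terminal supports lie in disjoint $\eta$-stripes of width $O(k_{0,j})$ around $\eta_{0,j}$, the pieces $h_{\infty,j}$ are mutually orthogonal in $L^{2}$, and using $|\eta|\asymp\eta_{0,j}$ on the support I obtain $\|h_{\infty}\|_{H^{s}}^{2}\asymp\sum_{j}(\log\eta_{0,j})^{-4}<\infty$ while $\|h_{\infty}\|_{H^{s+\epsilon}}^{2}\gtrsim\sum_{j}\eta_{0,j}^{2\epsilon}(\log\eta_{0,j})^{-4}=\infty$ for every $\epsilon>0$. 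The $\mathcal{G}_{3}$-norm of $h_{0,j}$ with constant $c'$ is at most $a_{j}\exp(c'\sqrt[3]{\eta_{0,j}})$, and squaring and summing over $j$ converges as soon as $c'<\sqrt[3]{c_{1}}$; hence $h_{0}\in\mathcal{G}_{3}$ with a suitably small Gevrey constant.

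For the $H^{s}$-convergence I use the invariant support from Proposition \ref{prop:iterate} to estimate $\|h_{j}(t)\|_{H^{s}}\lesssim\eta_{0,j}^{s}\|h_{j}(t)\|_{L^{2}}$ and $\|h_{j}(t)\|_{L^{2}}\lesssim\|h_{\infty,j}\|_{L^{2}}$ uniformly in $t$; since only the indices $j$ with $T'_{1,j}>t$ can deviate from their limit, $\|h(t)-h_{\infty}\|_{H^{s}}^{2}$ is then controlled by a tail of the summable series above and vanishes as $t\to\infty$. For damping I write $\|F(t)\|_{L^{2}}^{2}=\sum_{k}|k|^{-2}|\tilde h(t,k,kt)|^{2}$; after the transient, and thanks to the disjoint-support construction, at most one chain $j$ contributes to each $(k,t)$. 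The support of $\tilde h_{\infty,j}(k,\cdot)$ is an interval of bounded length around $\eta_{0,j}$, so for each fixed $k$ the quantity $|\tilde h(t,k,kt)|$ is eventually zero; a dominated-convergence argument against the envelope $\sum_{k}|k|^{-2}\|\tilde h_{\infty}(k,\cdot)\|_{L^{\infty}_{\eta}}^{2}$, which is finite when $s\geq 0$, then yields $\|F(t)\|_{L^{2}}\to 0$.

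The main obstacle I expect is the uniform-in-$t$ control $\|h_{j}(t)\|_{L^{2}}\lesssim\|h_{\infty,j}\|_{L^{2}}$ \emph{during} the echo chain, i.e.\ at intermediate times $T_{k,j}<t<T'_{1,j}$. One needs to track the nested Duhamel expansion of Proposition \ref{prop:iterate} and check that the partial products $\prod_{j'=k}^{k_{0,j}}\hat W(j')\eta_{0,j}/j'$ are dominated by the full product, which boils down to the monotonicity $\hat W(j')\eta_{0,j}/j'\geq 1$ throughout the relevant range $j'\leq k_{0,j}=\sqrt[3]{c_{*}\eta_{0,j}}$. A secondary technical point is that the spacing $\eta_{0,j+1}\gg\eta_{0,j}+Ck_{0,j+1}$ must be chosen so that the \emph{transient} (not only terminal) supports in Proposition \ref{prop:iterate} remain disjoint; this preserves both the $L^{2}$-orthogonality and the at-most-one-contributing-chain property used in the damping step.
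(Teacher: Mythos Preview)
Your approach is the same as the paper's: superpose the single-chain data from Theorem~\ref{thm:inflation} at well-separated frequencies $\eta_{0,j}\to\infty$, use the disjoint terminal supports from Proposition~\ref{prop:iterate} for orthogonality, and calibrate the amplitudes so that $h_\infty$ sits exactly at the $H^s$ threshold while $h_0$ stays in $\mathcal G_3$ with a small constant. Your identification of the uniform-in-$t$ bound $\|h_j(t)\|_{L^2}\lesssim\|h_{\infty,j}\|_{L^2}$ as the key step, and its justification via the monotonicity $\hat W(j')\eta_{0,j}/j'\geq 1$ for $j'\leq k_{0,j}$, is exactly right (the paper simply asserts this bound). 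You also supply an explicit argument for $F[h](t)\to 0$ in $L^2$, which the paper leaves implicit.

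There is one genuine slip in your calibration. You set $a_j=\eta_{0,j}^{-s}(\log\eta_{0,j})^{-2}\exp(-\sqrt[3]{c_1\eta_{0,j}})$ and then claim $\|h_{\infty,j}\|_{L^2}\asymp\eta_{0,j}^{-s}(\log\eta_{0,j})^{-2}$. But Theorem~\ref{thm:inflation} only gives the two-sided bound $\exp(\sqrt[3]{c_1\eta_{0,j}})\leq\|\tilde h_{\infty,j}\|_{L^2}\leq\exp(\sqrt[3]{c_2\eta_{0,j}})$ with in general $c_1<c_2$, so your $\|h_{\infty,j}\|_{L^2}$ could be as large as $\eta_{0,j}^{-s}(\log\eta_{0,j})^{-2}\exp\bigl((\sqrt[3]{c_2}-\sqrt[3]{c_1})\,\eta_{0,j}^{1/3}\bigr)$, and the $H^s$ sum diverges. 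The fix is the one the paper uses: normalize by the \emph{actual} inflation factor $C_j:=\|\tilde h_{\infty,j}\|_{L^2}$, i.e.\ take $a_j=\eta_{0,j}^{-s}\alpha_j/C_j$ with $(\alpha_j)\in\ell^2$ but $(\eta_{0,j}^\sigma\alpha_j)\notin\ell^2$ for any $\sigma>0$. Then $\|h_{\infty,j}\|_{H^s}\asymp|\alpha_j|$ exactly, the $H^s$ convergence and $H^{s+}$ divergence follow as you wrote, and the $\mathcal G_3$ membership of $h_0$ uses only the lower bound $C_j\geq\exp(\sqrt[3]{c_1\eta_{0,j}})$.
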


\begin{proof}[Proof of Theorem \ref{thm:blowup}]
  Let $\eta_{0,j}, k_{0,j}$, $h_{0,j}$ be a sequence of frequencies and initial
  data as in Theorem \ref{thm:inflation} with $\|h_{0,j}\|_{L^2}=1$ and let
  $h_j(t)$ be the corresponding solutions and $h_{j,\infty}$ the asymptotic
  state. After possibly choosing a subsequence we may further assume that the
  sets
  \begin{align*}
    (\eta_{0,j}-1/2-\delta k_{0,j},\eta_{0,j}+1/2+\delta k_{0,j})
  \end{align*}
  are all disjoint and hence by Proposition \ref{prop:iterate} all $h_{j}(t)$
  are $L^2$ orthogonal for all times $0<t\leq \infty$. Furthermore, by Theorem
  \ref{thm:inflation} there exists $C_j \approx \exp(\sqrt[3]{c
    \eta_{0,j}})$ such that
  \begin{align*}
    \lim_{t\rightarrow \infty} \|h_{j}(t)\|_{L^2} = \|h_{j,\infty}\|_{L^2}= C_j \|h_{0,j}\|_{L^2}=C_j,
  \end{align*}
  and there exist times $T_j$ after which $h_j(t)$ is stationary.
  
  Let next $(\alpha_j)_j \in l^2$ but such that for no $\sigma>0$,
  $\eta_{0,j}^{\sigma} \alpha_j \in l^2$. For a given $s \in \R$ the solution
  with initial data
  \begin{align}
    h_0= \sum \frac{1}{C_j} \eta_{0,j}^{-s} \alpha_{j} h_{0,j}
  \end{align}
  is given by
  \begin{align*}
    h(t)= \sum \frac{1}{C_j} \eta_{0,j}^{-s} \alpha_{j} h_{j}(t).
  \end{align*}
  Since the functions $h_{0,j}$ are disjointly supported in Fourier space, $L^2$
  normalized, and concentrated near $\eta_{0,j}$, we obtain that for any $c'<1$
  \begin{align*}
    \int \exp(2 \sqrt[3]{c' |\eta|}) |\tilde{h}_0|^2 \leq C \sum_{j}\exp(2c\sqrt[3]{ c'|\eta_{0,j}|} ) \frac{1}{C_j^2} \eta_{0,j}^{-2s} |\alpha_{j}|^2 <\infty,
  \end{align*}
  where we used that growth of $C_j$ dominates for $j\rightarrow \infty$ and
  that $\alpha \in l^2$. In order to show the asymptotic convergence we note
  that after the time $T_J$
  \begin{align*}
    h(t)= \sum_{j\leq J} \eta_{0,j}^{-s} \alpha_{j} \frac{h_{j,\infty}}{C_j} \\
    + \sum_{j\geq J} \eta_{0,j}^{-s} \alpha_{j} \frac{h_{j}(t)}{C_j}.
  \end{align*}
  The first sum is stationary for all future times and $\eta_{0,j}^{-s}
  \frac{h_{j,\infty}}{C_j}$ is (approximately) $H^s$ normalized. For the
  remaining integral we note that $\eta_{0,j}^{-s}\frac{h_{j}(t)}{C_j}$ is
  uniformly bounded in $H^{s}$ and disjointly supported in Fourier space and
  thus
  \begin{align*}
    \|\sum_{j\geq J} \eta_{0,j}^{-s} \alpha_{j} \frac{h_{j}(t)}{C_j}\|_{H^s}^2 \leq C \sum_{j\geq J} |\alpha_j|^2 \rightarrow 0 
  \end{align*}
  as $J \rightarrow \infty$.
\end{proof}
We have thus constructed a solution with Gevrey $3$ regular initial data which
not only exhibits norm inflation but \emph{blow-up} in any Sobolev regularity.
Therefore ``strong Landau damping'' in the sense \ref{item:strong}, that is
scattering to free transport, fails.
However, ``physical Landau damping'' in the sense \ref{item:physical}, that is the convergence of the force field,
\emph{persists}!
Here, it was crucial to be able to consider not just a single \emph{echo chain}
as in Theorem \ref{thm:inflation} but infinitely many.
That is, any single chain not matter how long is asymptotically stable after a
finite time by Proposition \ref{prop:asymptoticallystationary}. Hence, in order
to obtain non-trivial asymptotic behavior like blow-up we need to construct
\emph{sequences of echo chains} which become resonant at later and later times
and which become longer and longer.
In particular, we may not require any constraint on the size of $\eta_0$ which
remains the main obstacle in extending the preceding results to other linearly
stable homogeneous states $f_0(v)$ or the nonlinear dynamics.

\section{Stability in Gevrey 3 with Large Constant}
\label{sec:stability}

As a complementary result to the instability constructions of Section
\ref{sec:blow-up} in this section we show that if
\begin{align*}
  \sum_{k}\int |\tilde{h_0}(k,\eta)|^2 \exp(C \sqrt[3]{|\eta|}) d\eta<\infty
\end{align*}
for a suitable constant $C>1$, then $h(t) \in \mathcal{G}_3$ for all times and $h_{\infty} \in
\mathcal{G}_3$. Thus the Gevrey $3$ class is \emph{critical} for the linear
problem \eqref{eq:LVP} just as it is in the nonlinear problem.

We remark that the upper bounds of Theorem \ref{thm:inflation} are also valid
for generic frequency-localized initial data (without positivity assumption).
\begin{lem}[Upper Bound]
  \label{lem:upper}
  Let $\psi \in \mathcal{S}(\R)$ with Fourier support in $(-\delta,\delta)$ and
  $\hat{\psi}\geq 0$. Let further $h_0 \in L^2$ supported in $\{k_0\} \times
  (\eta_0-1/2,\eta_0+1/2)$.
  Then the solution $h(t)$ of \eqref{eq:LVP} is stationary for $t> \eta_0+1/2+
  \delta k_0=:T_1'$, supported in $\{0,\dots, k_0+1\} \times
  (\eta_0-1/2-k_0\delta, \eta_0+1/2+k_0 \delta)$ and for a constant $c_2>0$ it
  holds that
  \begin{align*}
    \|h(T_1')\|_{L^2} \leq \exp(\sqrt[3]{c_2 \eta_0})\|h_0\|_{L^2}. 
  \end{align*}
\end{lem}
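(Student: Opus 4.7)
The plan is to follow the upper-bound computation inside the proof of Theorem \ref{thm:inflation}, modified so that positivity of $h_0$ is no longer required. The frequency-support statement and the stationarity of $h(t)$ for $t > T_1'$ are direct consequences of Proposition \ref{prop:iterate} and Proposition \ref{prop:asymptoticallystationary}, neither of which uses any sign information, so the only real task is the $L^2$ estimate.

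First I would fix $k \in \{0,1,\dots,k_0+1\}$ and apply the Duhamel identity \eqref{eq:11} of Proposition \ref{prop:iterate} iteratively, chaining together the echoes $k_0 \to k_0-1 \to \cdots \to k$, to express $h(T_1',k,\eta)$ as an iterated integral over echo times $\tau_{k+1},\dots,\tau_{k_0}$. The integrand carries the prefactor $\prod_{j=k+1}^{k_0} j\hat W(j)$, linear factors of the form $(j-1)\tau_{j-1}-j\tau_j+\tau_j$, compactly supported $\hat\psi$-factors, and a final evaluation against $h_0(k_0,k_0\tau_{k_0})$. Since $\hat\psi$ is supported in $(-\delta,\delta)$ the integrand forces $j\tau_j = \eta_0 + O(\delta)$ at each step, so every linear factor is bounded by $C\eta_0/j$.

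Next I would change variables $s_j = j\tau_j$ (Jacobian $\prod_{j=k+1}^{k_0} 1/j = k!/k_0!$), use $\hat\psi \geq 0$ to drop absolute value signs on the $\hat\psi$-factors, and extend each $s_j$-integration to all of $\R$ using the Fourier support of $h_0$. The result is precisely the value at $\eta$ of the iterated convolution $\hat\psi^{*(k_0-k)} * h_0(k_0,\cdot)$, whose $L^2$ norm is at most $\|\psi\|_{L^\infty}^{k_0-k}\|h_0\|_{L^2}$ by Plancherel. Using $|\hat W(j)| = j^{-2}$ to telescope $\prod_{j=k+1}^{k_0} j|\hat W(j)| = k!/k_0!$ and combining with the linear-factor contribution $(C\eta_0)^{k_0-k}\cdot k!/k_0!$ one arrives at
\begin{align*}
\|h(T_1',k,\cdot)\|_{L^2} \leq \left(\frac{k!}{k_0!}\right)^{3} (C\|\psi\|_{L^\infty}\eta_0)^{k_0-k}\|h_0\|_{L^2}.
\end{align*}

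Finally, setting $m = k_0-k$ and using $k_0!/(k_0-m)! \geq m!$ one obtains the cleaner bound $\|h(T_1',k,\cdot)\|_{L^2}\leq (C\|\psi\|_{L^\infty}\eta_0)^m/(m!)^3 \|h_0\|_{L^2}$. Summing the squares orthogonally over $k$ and noting that $\sum_{m\geq 0}(C\eta_0)^m/(m!)^3$ is dominated by its Stirling maximum at $m\sim (c\eta_0)^{1/3}$, one concludes $\|h(T_1')\|_{L^2}\leq \exp(\sqrt[3]{c_2\eta_0})\|h_0\|_{L^2}$, with the polynomial mode-count factor absorbed into $c_2$. The main technical burden is the careful bookkeeping of Jacobians and the resonant approximations of the linear factors, not a genuinely new analytic estimate; this is essentially the triangle-inequality version of the lower-bound computation in Theorem \ref{thm:inflation}, which is why the assumption $\hat\psi\geq 0$ (but not positivity of $h_0$) is required.
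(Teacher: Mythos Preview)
Your proposal is correct and follows essentially the same route as the paper: iterate the Duhamel identity of Proposition~\ref{prop:iterate}, bound the resonant linear factors by $C\eta_0/j$, change variables $s_j=j\tau_j$, and control the resulting iterated convolution in $L^2$ via Plancherel, then finish with Stirling. One small slip: after taking absolute values the convolution is against $|h_0|$, not $h_0$ (the paper writes $\mathcal F^{-1}(|h_0|)$), though this does not affect the bound since $\||h_0|\|_{L^2}=\|h_0\|_{L^2}$; and note that the hypothesis $\hat\psi\geq 0$ is not actually essential here---the paper's version works with $|\hat\psi|$ and $\|\mathcal F^{-1}(|\hat\psi|)\|_{L^\infty}$ in place of $\|\psi\|_{L^\infty}$.
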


\begin{proof}[Proof of Lemma \ref{lem:upper}]
 We recall from the proof of Theorem \ref{thm:inflation} that $h(T_1',0,\eta)$
 can be explicitly computed in terms of the initial data:
\begin{align*}
  h(T_1',0,\eta) &= \left( \prod_{j=1}^{k_0} j \hat{W}(j)\right) \\
                    & \quad \int_{T_{j}\leq \tau_j \leq T_{j}'}  (\eta-\tau_1 + \tau_1)(\tau_1-2\tau_2 + \tau_2) \dots \\
                    & \quad \psi(\eta-\tau_1) \psi(\tau_1-2 \tau_2) \dots  \\
                    & \quad h_0(0, k_0, k_0 \tau_{k_0}) d\tau_1 \dots d\tau_{k}.
\end{align*}
Similarly also $h(T_1',k,\eta)$ for $k=1,\dots, k_0+1$ can be explicitly
computed in terms of a $|k_0-k|$-fold integral. Since those estimates are
analogous (and smaller) we focus on the case $k=0$.
Here, we may estimate from above by absolute values and using \eqref{eq:14} to obtain that
\begin{align*}
  |h(T_1',0,\eta)|&\leq \left( \prod_{j=1}^{k_0} j |\hat{W}(j)| (T_j'+\delta)\right) \\
                  & \quad  \int_{T_{j}\leq \tau_j \leq T_{j}'} |\psi|(\eta-\tau_1) |\psi|(\tau_1-2 \tau_2) \dots |\psi|((k_0-1)\tau_{k_0-1}- k_0\tau_{k_0}) \\
  & \quad |h_0|( k_0, k_0 \tau_{k_0}) d\tau_1 \dots d\tau_{k}.
\end{align*}
Again using the support assumption on $h_0$ to replace the domain of integration
by all of $\R^{k_0}$ and changing variables to $s_j=j \tau_j$, we obtain 
\begin{align}
  \label{eq:2}
  \frac{1}{k_0!} \left( \prod_{j=1}^{k_0} j \hat{W}(j)(T_j'+\delta) \right)
\end{align}
times the $k_0$-fold convolution
\begin{align}
  \label{eq:4}
  \int |\psi|(\eta-s_1) |\psi|(s_1-s_s) \dots  |h_0|( k_0, s_{k_0}) ds.
\end{align}
By Plancherel's theorem the $L^2$ norm of this integral (which due to the
compact support in frequency is comparable to any Sobolev or Gevrey norm) is
equal to
\begin{align*}
  \| (\mathcal{F}^{-1}(|\psi|))^{k_0} \mathcal{F}^{-1}(|h_0|)\|_{L^2} \leq \|\mathcal{F}^{-1}(|\psi|)\|_{L^\infty}^{k_0} \| \mathcal{F}^{-1}(|h_0|)\|_{L^2}\|= \|\mathcal{F}^{-1}(|\psi|)\|_{L^\infty}^{k_0} \| h_0\|_{L^2}\|.
\end{align*}
Denoting $\|\mathcal{F}^{-1}(|\psi|)\|_{L^\infty}$ as $c_2$ and estimating
\eqref{eq:2} as in Theorem \ref{thm:inflation} we thus obtain the desired upper
bound by using Stirling's formula.
\end{proof}

In order to pass from a stability result for frequency-localized data to one for
general data we use the linearity of the equation and the control of the support
of solutions.
Since we required that $\eta_0\gg k_0$, here it is advantageous to first
consider the problem starting at time $t=100$.
\begin{thm}
  \label{thm:stability}
  Let $h_{100} \in \mathcal{G}_{3}$ be such that
  \begin{align}
    \label{eq:17}
  \sum_{k}\int |\tilde{h}_{100}(k,\eta)|^2 \exp(2 \sqrt[3]{|\eta|}) d\eta=C_{100}<\infty.
  \end{align}
  Then there exists a constant $C$ (independent of $h_{100}$ or $C_{100}$) such
  that for all times $t>100$
  \begin{align}
    \label{eq:18}
    \sum_{k}\int |\tilde{h}(t,k,\eta)|^2 \exp(1 \sqrt[3]{|\eta|}) d\eta \leq C C_{100}.
  \end{align}
\end{thm}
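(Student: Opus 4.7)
The plan is to exploit linearity of \eqref{eq:LVP} together with the sharp single-atom bound of Lemma~\ref{lem:upper}. I decompose the datum $h_{100}$ as an $L^2$-orthogonal sum
\begin{equation*}
  h_{100} = \sum_{(k_0,\eta_0)\in\Z\times\Z} h_{100}^{(k_0,\eta_0)},\qquad \supp h_{100}^{(k_0,\eta_0)} \subset \{k_0\}\times[\eta_0-1/2,\eta_0+1/2],
\end{equation*}
and solve \eqref{eq:LVP} separately for each atom, writing $h(t) = \sum h^{(k_0,\eta_0)}(t)$ by linearity. Starting at $t=100$ is convenient: for any atom with $\eta_0\leq 100 k_0$ the first echo time $T_{k_0} = (\eta_0-1/2)/k_0$ already lies in the past, so Proposition~\ref{prop:iterate} applied on $(100,\infty)$ forces the atom to remain stationary. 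Atoms with $\eta_0\gg k_0$ generate a genuine echo chain, for which Lemma~\ref{lem:upper} (together with the analogous statement for a partial chain truncated at an intermediate $t<T_1'$) gives for all $t>100$
\begin{equation*}
  \|h^{(k_0,\eta_0)}(t)\|_{L^2}\leq \exp\bigl(\sqrt[3]{c_2\eta_0}\bigr)\,\|h_{100}^{(k_0,\eta_0)}\|_{L^2},
\end{equation*}
while Proposition~\ref{prop:iterate} confines $\supp h^{(k_0,\eta_0)}(t)$ to the stripe $\{0,\dots,k_0+1\}\times[\eta_0-1/2-\delta k_0,\eta_0+1/2+\delta k_0]$.

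To reassemble the global solution under the Gevrey-$3$ weight with constant $1$, I apply Cauchy--Schwarz pointwise in Fourier space using the overlap count $N(k,\eta) := \#\{(k_0,\eta_0) : (k,\eta)\in\supp h^{(k_0,\eta_0)}(t)\}$:
\begin{equation*}
  \sum_k\int |\tilde h(t,k,\eta)|^2 e^{\sqrt[3]{|\eta|}}\,d\eta \leq \sum_{(k_0,\eta_0)}\sum_k\int N(k,\eta)\,|\tilde h^{(k_0,\eta_0)}(t,k,\eta)|^2 e^{\sqrt[3]{|\eta|}}\,d\eta.
\end{equation*}
A contributing atom must satisfy $k_0\geq k-1$, $|\eta-\eta_0|\leq (k_0-k)\delta+1/2$ and (after discarding the stationary atoms) $100 k_0\leq \eta_0\lesssim|\eta|$, so that $N(k,\eta)\lesssim |\eta|^2$. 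On each such stripe $e^{\sqrt[3]{|\eta|}} \leq C e^{\sqrt[3]{|\eta_0|}}$, and combining with the single-atom bound yields
\begin{equation*}
  \sum_k\int |\tilde h(t,k,\eta)|^2 e^{\sqrt[3]{|\eta|}}\,d\eta \lesssim \sum_{(k_0,\eta_0)} \eta_0^2\, e^{(1+2c_2^{1/3})\sqrt[3]{|\eta_0|}}\,\|h_{100}^{(k_0,\eta_0)}\|_{L^2}^2.
\end{equation*}
Choosing $\psi$ small enough that $c_2 = \|\mathcal{F}^{-1}(|\psi|)\|_{L^\infty}$ satisfies $2c_2^{1/3}<1$, the polynomial $\eta_0^2$ is absorbed by the remaining exponential slack, and the right-hand side is controlled by $C\sum e^{2\sqrt[3]{|\eta_0|}}\|h_{100}^{(k_0,\eta_0)}\|_{L^2}^2 = C\,C_{100}$, which is \eqref{eq:18}.

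The main obstacle will be the bookkeeping of this overlap count and the sharp matching of exponents: one has to check that $N(k,\eta)\lesssim|\eta|^2$ holds uniformly in $t$, and that the weight gap $2-1=1$ really majorizes $2c_2^{1/3}$ after the polynomial $\eta_0^2$ is accounted for. A minor secondary point is that Lemma~\ref{lem:upper} is stated only at the final chain time $T_1'$; to control $h(t)$ at an arbitrary intermediate $t\in(100,T_1')$ I reapply the Duhamel representation of Proposition~\ref{prop:iterate} up to a generic truncation time and observe that restricting an integration interval cannot increase the iterated convolution bound produced in the proof of Lemma~\ref{lem:upper}.
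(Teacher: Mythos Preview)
Your proposal is correct and follows essentially the same strategy as the paper: decompose $h_{100}$ into frequency-localized atoms indexed by $(k_0,\eta_0)$, observe that atoms with $\eta_0\lesssim 100k_0$ are stationary on $(100,\infty)$ by Lemma~\ref{lem:obs1}, apply the single-chain upper bound of Lemma~\ref{lem:upper} to the remaining atoms, and reassemble with a polynomial overlap loss absorbed into the exponential gap between the constants $2$ and $1$. The paper organizes the decomposition in two stages (first by the $\eta$-slab $\Omega^j$, then by $k$ within each slab) while you do it in one step with the overlap count $N(k,\eta)\lesssim|\eta|^2$, but the resulting polynomial loss and the needed smallness condition $c_2^{1/3}<1/2$ on $\psi$ are the same; you are also slightly more explicit than the paper about the intermediate-time issue and the constraint on $c_2$.
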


\begin{proof}[Proof of Theorem \ref{thm:stability}]
  Let $h_{100} \in \mathcal{G}_{3}$ be given. By the arguments of Section
  \ref{sec:better} the evolution of $k>0$, $k=0$ and $k<0$ decouple and the
  evolution for $\eta$ and $k$ having opposite signs is trivial.
  In the following we may thus without loss of generality assume that $k>0,\eta\geq 0$.
  We then define the set
  \begin{align*}
    \Omega_{100}=\{(k,\eta): k \geq 100 \eta\}.
  \end{align*}
  By Lemma \ref{lem:obs1} the solution with initial data $1_{\Omega_{100}}
  h_{100}$ is stationary.
  We thus focus on the evolution of the initial data in the complement.
  Here, we may further partition with respect to $\eta$ and $k$:
  \begin{align}
    \label{eq:5}
    \begin{split}
    \Omega^j &= \{(k,\eta): k \leq 100 \eta, \eta \in (j-1/2,j+1/2)\},\\ 
    \Omega^{j,k} &= \{(k,\eta): \eta \in (j-1/2,j+1/2)\} \text{ for } k\leq 100 \eta. 
    \end{split}
  \end{align}
  If $f_j(t)$ denotes the solution with initial data $1_{\Omega_j}h_{100}$ then
  by Proposition \ref{prop:iterate} $f_j(t)$ is supported in $(j-1/2-\delta
  \frac{j}{100}, j+1/2+ \frac{j}{100})$ and thus has overlap with only about $j$
  of its neighbors.
  Therefore, we may estimate
  \begin{align}
    \label{eq:15}
    \|f(t)\|_{L^2}^2 = \|1_{\Omega_{100}}
  h_{100} + \sum_j f_j(t)\|_{L^2}^2 \leq 2\|1_{\Omega_{100}}h_{100}\|_{L^2}^2 + \sum_{j} j^2\|f_j(t)\|_{L^2}^2 
  \end{align}
  and due to the frequency localization analogous estimates also hold with
  Sobolev or Gevrey norms in place of $L^2$.
  It thus remains to estimate $\|f_j(t)\|_{L^2}$ in terms of the initial data.
  Here, we may further split according to \eqref{eq:5} into
  \begin{align}
    \label{eq:16}
    \begin{split}
    f_j(t)&= \sum_{1\leq k\leq \frac{j}{100}} f_{j,k}(t) \\
    \leadsto \|f_j(t)\|_{L^2}^2 &\leq j \sum_{k} \|f_{j,k}(t)\|_{L^2}^2.
    \end{split}
  \end{align}
  Since each $f_{j,k}$ is highly frequency-localized we may apply Lemma
  \ref{lem:upper} to estimate
  \begin{align*}
    \|f_{j,k}(t)\|_{L^2} \leq \exp(\sqrt[3]{c_2 |j|}) \|1_{\Omega_{j,k}} f_{100}\|_{L^2}.
  \end{align*}
  The loss of powers of $j$ in \eqref{eq:15} and \eqref{eq:16} and the loss of
  the factor $\exp(\sqrt[3]{c_2 |j|})$ can easily be absorbed into the loss
  of $\exp(1 \sqrt[3]{ |j|})$ from \eqref{eq:17} to \eqref{eq:18}. Thus,
  indeed $f(t) \in \mathcal{G}_{3}$ with contant $1$ uniformly in time and the
  convergence to $h_{\infty}$ follows from the convergence of the partial sums
  in $j$ and $k$ (see also the proof of Theorem \ref{thm:blowup}).
\end{proof}

It remains to estimate the growth for the finite time interval $(0,100)$, where
we only need a rough upper estimate.
\begin{lem}
  \label{lem:local}
  There exists a constant $C$ such that if $h_0 \in \mathcal{G}_{3}$ with
  \begin{align*}
  \sum \int |\tilde{h}_0|^2 \exp(2 \sqrt[3]{|\eta|}) d\eta\leq C_{0}<\infty,
  \end{align*}
  then
  \begin{align*}
    \sum \int |\tilde{h}(100)|^2 \exp(2 \sqrt[3]{|\eta|}) d\eta\leq C C_{0}.
  \end{align*}
\end{lem}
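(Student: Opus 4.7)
My plan is to mirror the decomposition strategy used in the proof of Theorem \ref{thm:stability}, adapted to the bounded time window $I = (0, 100)$. By Lemma \ref{lem:obs1} applied with this $I$, the evolution of $h$ sees only the restriction of $h_0$ to $\{l\} \times U_l$ for each $l$, where $U_l = l I$. Consequently, the restriction of $h_0$ to the ``inactive region''
\begin{align*}
  \Omega^{\ast} := \{(k, \eta) : |\eta| \geq 100(|k|+1) + \delta\}
\end{align*}
evolves trivially on $(0, 100)$. I would therefore decompose $h_0 = 1_{\Omega^{\ast}} h_0 + 1_{(\Omega^{\ast})^c} h_0$ and focus attention on the active part.

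I would next partition the active part into frequency-localized pieces $h_{0, j, k}$ supported in $\Omega^{j, k} := \{k\} \times (j - \tfrac{1}{2}, j + \tfrac{1}{2})$ with $|j| \leq 100(|k|+1) + \delta$, exactly as in \eqref{eq:5}. By a variant of Proposition \ref{prop:iterate} (allowing the echo chain to be incomplete at time $100$), the corresponding solution $h_{j, k}(100)$ remains supported in $\{0, \dots, |k|+1\} \times (j - \tfrac{1}{2} - \delta|k|, j + \tfrac{1}{2} + \delta|k|)$, and its $L^2$ norm is given by a (possibly truncated) product formula in the spirit of the proofs of Theorem \ref{thm:inflation} and Lemma \ref{lem:upper}.

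The key new observation, which replaces the $|j|$-dependent factor $\exp(\sqrt[3]{c_2|j|})$ appearing in Lemma \ref{lem:upper}, is that on the short time window $(0, 100)$ the relevant echo-chain product is bounded by an \emph{absolute} constant. Indeed, for a chain starting at frequency $(|k|, |j|)$ with $|j| \leq 100|k|$ and for the Coulomb kernel $\hat{W}(m) = m^{-2}$, each step completed by time $100$ (i.e.\ $m \geq \lceil |j|/100 \rceil$) contributes a factor $|j|/m^3 \leq 100/m^2$; applying Stirling to the product $\prod_{m = \lceil |j|/100 \rceil}^{|k|} |j|/m^3$ and using $|j| \leq 100 |k|$ shows that its supremum over all admissible $(|j|, |k|)$ is attained at a bounded value of $|k|$ (around $10 \sqrt{e}$) and equals a universal constant $C^{\ast}$. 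Hence $\|h_{j, k}(100)\|_{L^2} \leq C^{\ast} \|h_{0, j, k}\|_{L^2}$ uniformly in $(j, k)$.

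Reassembling the pieces by the quasi-orthogonality argument \eqref{eq:15}--\eqref{eq:16} of Theorem \ref{thm:stability}, and incorporating the Gevrey weight $\exp(2 \sqrt[3]{|\eta|})$, then yields the desired conclusion. The main obstacle is the following subtlety: Proposition \ref{prop:iterate} was proved under the separation assumption $\eta_0 \gg k_0$ so that successive echo intervals are disjoint, whereas the active regime $|j| \lesssim 100|k|$ may produce overlapping echoes; moreover, the evolved $\eta$-support has width $\delta|k|$, which need not be small compared to $|j|^{2/3}$, so the weight $\exp(2 \sqrt[3]{|\eta|})$ may vary nontrivially across each piece's support. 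Both issues are ultimately balanced by the smallness of the echo product for extreme values of $|k|$ (where the chain is long and the Coulomb decay dominates): an elementary case analysis shows that the product of the weight-variation factor and the growth factor $(C^{\ast})^2$ remains uniformly bounded in $(j, k)$ throughout the active regime, so these losses can be absorbed into the final constant $C$.
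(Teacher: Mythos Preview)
Your approach has a genuine structural gap. The echo-chain machinery of Proposition~\ref{prop:iterate} and Lemma~\ref{lem:upper} is proved under the separation hypothesis $\eta_0 \gg k_0$, which guarantees that the resonant time intervals $I_m = \frac{1}{m}(\eta_0-\tfrac12,\eta_0+\tfrac12)$ are pairwise disjoint and that the sets $U_l$, $U_{l\pm 1}^\delta$ in Lemma~\ref{lem:obs3} do not overlap. But the active region for Lemma~\ref{lem:local} is precisely $|\eta_0| \le 100|k_0|+\delta$, so this separation hypothesis is violated for \emph{every} piece $h_{0,j,k}$ you consider. In this regime the echo intervals overlap, the cascade can propagate both downward and \emph{upward} (mode $k_0+1$ is created and can immediately feed $k_0+2$, etc.), and the solution is simply not given by a product of echo factors. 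Your resolution --- that the product is small for extreme $|k|$ --- addresses the size of the product but not its validity: there is no formula to bound in the first place.

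The paper's proof avoids this entirely by abandoning the echo-chain picture on $(0,100)$ and instead running a direct contraction argument on the Duhamel formulation \eqref{eq:Duhamel}. Young's inequality bounds the integral operator by $100\,|\hat W(l)|\,\|\hat\psi\|_{L^1}$; for large $l$ the Coulomb decay $|\hat W(l)|=l^{-2}$ makes this a contraction outright, while for the finitely many small $l$ one subdivides $(0,100)$ into uniformly small time steps. The Gevrey weight is handled by noting that on the support of $\hat\psi(\eta-lt)$ the exponent varies by at most $O(\delta\,\eta^{-2/3})$, which is harmless for $\eta$ large and absorbed into the finitely many small-$\eta$ steps otherwise. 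This is a genuinely different, more elementary route; your frequency-localized strategy could in principle be salvaged, but it would require reproving the growth bound without the echo-chain structure, which is essentially the contraction argument anyway.
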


\begin{proof}[Proof of Lemma \ref{lem:local}] 
We remark that by Lemma \ref{lem:obs1} we only need to consider initial data with
\begin{align*}
  \eta \leq 100 k,
\end{align*}
and that by the Fourier formulation of \eqref{eq:LVP} and the compact
support of $\psi$ we see that $h(100)-h_0$ is supported in
\begin{align*}
  \eta \leq 100 k + \delta.
\end{align*}

We recall that the Duhamel integral formulation of \eqref{eq:LVP} is given by \eqref{eq:Duhamel}
\begin{align*}
    h(T_1,k,\eta)- h(T_0,k,\eta) &= - \sum_{l=k \pm 1} l \hat{W}(l) \int_{T_0}^{T_1} \tilde{h}(t,l,lt) (\eta-lt -
    t\text{sgn}(l-k)) \hat{\psi}(\eta-lt) dt,
\end{align*}
and in the following intend to argue by iterated local in time estimates.
Here we first consider $L^2$ estimates and subsequently extend the estimates to
Gevrey regularity.\\

We remark that by Young's convolution inequality the right-hand-side can be
controlled in terms of 
\begin{align*}
 \| (t+\delta l) \hat{W}(l) \hat{\psi}(\eta-lt)\|_{L^1((T_0,T_1)} \leq 100 \hat{W}(l) \|\hat{\psi}(\eta-s)\|_{L^1 (lT_0,lT_1)}
\end{align*}
For $l$ large we may use the decay of $W(l)$ for $l \rightarrow \infty$ and that
thus
\begin{align*}
  100 \hat{W}(l) \|\hat{\psi}\|_{L^1(\R)}\ll 1
\end{align*}
even when we integrate over the whole space.
For $l$ smaller (of which there are only finitely many) we instead may choose the time step
$T_1-T_0$ sufficiently small such that the $L^1$ is small enough to obtain a
contraction. Since the size of the time step is unform, a bound for
$\|h(T)\|_{L^2}$ thus follows by iterating contraction mapping estimates.\\ 

In order to pass to a bound in Gevrey regularity we multiply both sides by
\begin{align*}
  \exp(C \sqrt[3]{\eta})= \exp(C (\sqrt[3]{\eta}-\sqrt[3]{lt}+\sqrt[3]{lt}).
\end{align*}
We thus have to modify our contraction argument so that the contribution due to
\begin{align*}
  \exp(C (\sqrt[3]{\eta}-\sqrt[3]{lt}) \tilde{\psi}(\eta-lt)
\end{align*}
is small.
Using the fact that $|\eta-lt|\leq \delta$ and the Taylor expansion, the exponent
is approximately of size $\exp(C\frac{1}{3}\eta^{-2/3} (\eta-lt)) \leq
\exp(C\frac{1}{3}\eta^{-2/3} \delta) $ and thus small for $\eta$ large (e.g.
larger than $1000$).
For $\eta$ not large we may instead use the unweighted estimate with small but
uniform time steps (to account for $\exp(C 10)$).
The desired estimate thus follows by iterating the contraction mapping bounds.
\end{proof}

Combining the results of Lemma \ref{lem:local} and Theorem \ref{thm:stability}
we thus have seen that the linearized Vlasov-Poisson equations around
self-similar states \eqref{eq:LVP}
\begin{itemize}
\item are \emph{stable} in the Gevrey class $\mathcal{G}_3$ with large constant. 
\item exhibit \emph{norm inflation} due full \emph{echo chains} just like the
  nonlinear problem.
\item exhibit \emph{blow-up} in any regularity class weaker than Gevrey 3. 
\item there exists Gevrey regular initial data which exhibits \emph{blow-up} and
  \emph{physical Landau damping} in the sense \ref{item:physical} at the same time!
\end{itemize}
We thus suggest that the Gevrey norm inflation of \cite{Villani_long, bedrossian2016nonlinear} and plasma echoes \cite{malmberg1968plasma} are
not nonlinear but \emph{secondary linear phenomena}. Furthermore, the physical phenomenon
of Landau damping, which is the asymptotic decay of the force field \ref{item:physical}, is more
\emph{robust} than the scattering to free transport dynamics \ref{item:strong} studied in
\cite{Villani_long, bedrossian2016nonlinear}.
Here, the linear dynamics around self-similar solutions seems to be related to
the modified asymptotic/scattering dynamics, but the precise behavior of the
nonlinear Vlasov-Poisson equations in lower regularity and its effect on Landau
damping remain challenging problems for future research.

\bibliographystyle{alpha} \bibliography{citations2.bib}

\end{document}